\documentclass[12pt]{amsart}

\usepackage{amsmath}
\usepackage{amsrefs}
\usepackage{hyperref}
\usepackage{amssymb}
\usepackage{amsthm}
\usepackage{bbm}
\usepackage{bm}
\usepackage{dsfont}
\usepackage[margin=1in]{geometry}
\usepackage{paralist}

\newtheorem{lemma}{Lemma}
\newtheorem*{lemma*}{Lemma}
\newtheorem{theorem}{Theorem}

\newtheorem{remark}{Remark}

\theoremstyle{definition}
\newtheorem{definition}{Definition}
\newtheorem{assumption}{Notation}

\def\*{\times}
\def\1{\mathbbm{1}}
\def\a{\mathfrak{a}}
\def\A{\mathbb{A}}

\def\D{\mathcal D}

\def\GL{\text{GL}_2}
\def\H{\mathcal H}
\def\Hp{\mathcal H_{+}}
\def\Hm{\mathcal H_{-}}
\def\He{\mathcal H_{=}}
\def\I{\mathcal I}

\def\L{\mathcal L}

\def\o{\mathfrak{o}}
\def\p{\mathfrak{p}}
\def\P{\mathcal P}
\def\Q{\mathbb{Q}}
\def\R{\mathbb{R}}
\def\SL{\text{SL}_2}
\def\SO{\text{SO}_2}

\def\Z{\mathbb{Z}}

\numberwithin{equation}{section}

\newcommand{\sgn}{\text{sgn}}
\newcommand{\Ad}{\text{Ad}}\def\R{\mathbb{R}}

\title[Sup norms]{Optimal sup norm bounds for newforms on $\GL$ with maximally ramified central character}
\author{F\'{e}licien Comtat}
\subjclass[2010]{11F03, 11F70}

\begin{document}

\begin{abstract}
Recently, the problem of bounding the sup norms of $L^2$-normalized cuspidal automorphic newforms
$\phi$ on $\GL$ in the level aspect has received much attention. 
However at the moment strong upper bounds are only available  if the central character $\chi$ of $\phi$ is not 
too highly ramified. 
In this paper, we establish a uniform upper bound in the level aspect for general $\chi$.
If the level $N$ is a square, our result reduces to $$\|\phi\|_\infty \ll N^{\frac14+\epsilon},$$
at least under the Ramanujan Conjecture. 
In particular, when $\chi$ has conductor $N$, this improves upon the previous best known bound 
$\|\phi\|_\infty \ll N^{\frac12+\epsilon}$ in this setup (due to Saha~\cite{hybrid})
and matches a lower bound due to Templier~\cite{Templier}, thus our result is essentially optimal in
this case.

\end{abstract}

\maketitle

\section{Introduction}

Let $\phi$ be a cuspidal automorphic form on $\GL(\A_\Q)$ with conductor $N=\prod_p p^{n_p}$ and central
character $\chi$.
Assume in addition $\phi$ is a newform, in the sense that there exists either a Maa{\ss} or
holomorphic
cuspidal newform $f$ of weight $k$ for $\Gamma_1(N)$ such that for all $g \in \SL(\R)$ we have $\phi(g)=j(g,i)^{-k}f(g \cdot i)$,
{  where as usual $j(g,z)=cz+d$ for $g=\begin{bmatrix}
	a      & b \\
	c     & d 
	\end{bmatrix} \in \SL(\R)$ and $z \in \mathbb{H}$}.
In particular, $\phi$ is bounded and {  has a {  finite} $L^2$ norm}, hence one may be interested
in asking how its $L^{\infty}$ and its $L^2$ norm relate. In the level aspect, one traditionally
asks for bounds for $\|\phi\|_\infty=\sup_g|\phi(g)|=\sup_z|y^{\frac{k}2}f(z)|$ depending on $N$ as $\|\phi\|_2$ is fixed. 
Subsequent investigations have shown that it is relevant for this problem to also take into account 
the conductor $C=\prod_p p^{c_p}$ of $\chi$.
 Assuming that $\phi$ is $L^2$-normalized, the ``trivial bound" is
\begin{equation}\label{thetrivialbound}
1 \ll \|\phi \|_{\infty} \ll N^{\frac12+\epsilon}
\end{equation}
for any $\epsilon > 0$. Here and below, the implied constant may depend on $\epsilon$ and on the
archimedean parameters of $\phi$. The upper bound in~(\ref{thetrivialbound}) does not appear to have
been written down previously for general $N$ and $C$, but it can be deduced from the main result
of~\cite{hybrid} for instance.

For squarefree $N$, the first non-trivial upper bound is due to Blomer and Holowinsky~\cite{BH}, and
has been subject to several improvements by Harcos and Templier (and some unpublished work of 
Helfgott and Ricotta) culminating with the result of~\cite{HT} which achieve the upper bound
$N^{\frac13+\epsilon}$. 
For non-squarefree $N$, the best result to date is due
to Saha~\cite{hybrid}, but it significantly improves on the trivial bound only when $\chi$ is not
highly ramified (here and elsewhere we say $\chi$ is highly ramified if 
$c_p > \lceil \frac{n_p}2 \rceil$ for some 
prime $p$). Indeed, if $\chi$ is not highly ramified and $N$ is a perfect square, then Saha's
result~\cite{hybrid} gives an upper bound of $N^{\frac14+\epsilon}$. Recent work of Hu and Saha (see~\cite{HuSaha}, 
especially the last paragraph of their introduction) suggests that this bound may be further improved in the compact case.
On the other hand, if $N=C$ and if $N$ is a perfect square, then Saha's result~\cite{hybrid} 
reduces to the trivial bound~(\ref{thetrivialbound}).

Templier was the first to provide evidence that the actual size of 
$\|\phi\|_\infty$ may depend on how ramified $\chi$ is. Namely, he proved in~\cite{Templier} that
whenever $N=C$ we have 
\begin{equation}\label{lower}
\|\phi\|_\infty \gg N^{-\epsilon}\prod_{p^{n_p} \| N} p^{\frac12 \lfloor \frac{n_p}2 \rfloor}.
\end{equation}
 In particular, if $N$ is a square, then 
\begin{equation*}
\|\phi\|_\infty \gg N^{\frac14-\epsilon}.
\end{equation*}
We shall prove the following comparable upper bound, which improves on~\cite{hybrid} when $\chi$
is highly ramified. 

\begin{theorem}\label{thmgen}
Let $\pi$ be an unitary cuspidal automorphic representation of 
$\text{GL}_2(\A_{\Q})$ with central character $\omega_\pi$. 
Let $N=\prod_p p^{n_p}$ be the conductor of $\pi$.
Let $\phi \in \pi$ be an $L^2$-normalized  newform.
Then 
$$\|\phi\|_{\infty} \ll_{\epsilon, \pi_\infty} N^{\delta+\epsilon} 
\prod_{p|N}p^{\frac12\lceil \frac{n_p}2 \rceil},$$
where $\delta$ is any {  bound} towards  {  the} Ramanujan Conjecture for $\pi$.
\end{theorem}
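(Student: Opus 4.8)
The plan is to bound $|\phi(g)|$ pointwise through the Whittaker--Fourier expansion of $\phi$, to factor the global Whittaker function into local pieces, and then to estimate each local factor separately, the heart of the matter being a uniform bound for the Whittaker function of the local newvector at the ramified primes. Write $n(x) = \left(\begin{smallmatrix}1 & x\\0 & 1\end{smallmatrix}\right)$, $a(t) = \left(\begin{smallmatrix}t & 0\\0 & 1\end{smallmatrix}\right)$, and let $W_\phi = \prod_v W_v$ be the factorization of the Whittaker function of $\phi$ into local newform Whittaker functions, normalized so $W_p(1) = 1$ at each finite place. From
\[
\phi(g) = \sum_{\gamma\in\Q^\times} W_\phi\big(a(\gamma)\,g\big)
\]
one gets $|\phi(g)| \le \sum_{\gamma\in\Q^\times} |W_\infty(a(\gamma)g_\infty)| \prod_{p<\infty} |W_p(a(\gamma)g_p)|$. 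By strong approximation for $\SL$ --- equivalently, by passing to the classical description of $\phi$ via $f$ on $\Gamma_1(N)\backslash\H$ --- it suffices to bound this when $g_\infty$ lies in a fixed fundamental domain and, at each $p\mid N$, $g_p$ ranges over representatives for the right cosets of $\GL(\Z_p)$ modulo the level-$p^{n_p}$ congruence subgroup on which $\phi$ transforms by a character; conjugating by Atkin--Lehner elements, one may take these representatives of the shape $a(p^{-c})\,n(u)$ with $0\le c\le\lceil\frac{n_p}{2}\rceil$ and $u\in\Z_p$. Everything thus reduces to three local estimates.

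\emph{Archimedean and unramified places.} The asymptotics of the $\GL(\R)$ Whittaker function, depending only on $\pi_\infty$, show that $W_\infty(a(\gamma)g_\infty)$ is negligible unless $|\gamma|\,y \ll_{\pi_\infty} 1$, where $y$ is the $a$-coordinate of $g_\infty$, with merely polynomial growth in the transition range; this truncates the $\gamma$-sum, and, since on the fundamental domain the coset $c = \lceil\frac{n_p}{2}\rceil$ reaches height $y \gg N^{-\epsilon}\big(\prod_{p\mid N} p^{\lceil n_p/2\rceil}\big)^{-1}$, it is this step (together with the volume/Rankin--Selberg factor relating the Whittaker and $L^2$ normalizations, which costs only $N^\epsilon$) that produces the factor $\prod_{p\mid N} p^{\frac12\lceil n_p/2\rceil}$. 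At primes $p\nmid N$, Macdonald's formula gives $W_p$ supported on $a(p^j)$ with $j\ge 0$ and $|W_p(a(p^j))| \ll_\epsilon p^{(\delta-\frac12+\epsilon)j}$, where $\delta$ is any exponent towards Ramanujan for $\pi$; inserting these into the truncated sum contributes $N^{\delta+\epsilon}$.

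\emph{Ramified primes.} It remains to bound $|W_p(a(\gamma)\,a(p^{-c})\,n(u))|$ for $p\mid N$, uniformly over $0\le c\le\lceil\frac{n_p}{2}\rceil$, $u\in\Z_p$, and the surviving $\gamma$. For this one needs the explicit structure of the newvector in $\pi_p$ across all possibilities for $\pi_p$ of conductor $p^{n_p}$ --- ramified principal series, twists of Steinberg, and supercuspidal, including the delicate dyadic cases --- in order to pin down (a) the exact support of $W_p$ on these Iwasawa cosets, which when combined with the archimedean truncation forces $\gamma$ to be highly divisible by $p$ and so keeps the number of contributing $\gamma$ at $O(N^\epsilon)$, and (b) the bound $|W_p(\cdot)| \ll_\epsilon p^{\delta n_p+\epsilon}$ on that support, so that the ramified places together cost no more than $N^\epsilon\prod_{p\mid N} p^{\delta n_p}$, consistent with $N^{\delta+\epsilon} = N^\epsilon\prod_p p^{\delta n_p}$. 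Assembling the three estimates then gives $\|\phi\|_\infty \ll_{\epsilon,\pi_\infty} N^{\delta+\epsilon}\prod_{p\mid N} p^{\frac12\lceil n_p/2\rceil}$, which is essentially sharp by Templier's lower bound when $N=C$.

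\emph{Main obstacle.} I expect the hardest part to be precisely the ramified local analysis: obtaining a clean, uniform description of the newvector's Whittaker function at a ramified prime, valid over all the relevant Iwasawa cosets (not merely the torus $a(p^j)$) and across the representation-theoretic types of $\pi_p$, and controlling its support sharply enough that the $\gamma$-sum loses only $N^\epsilon$. This is exactly where Saha's earlier bound was lossy in the highly ramified range, and where refined local newvector computations in the spirit of Hu--Saha are needed.
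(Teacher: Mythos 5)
Your skeleton (Whittaker expansion, strong approximation to a manageable set of cosets, local analysis at the ramified primes) is the shared starting point of this whole literature, but the three specific claims your plan rests on are not correct, and they sit exactly where the content of the theorem lies. First, the coset representatives you propose, $a(p^{-c})n(u)$ with $n(u)$ upper triangular, are left $N(\Q_p)$-equivalent to torus elements, on which $|W_p(a(\gamma)\cdot)|\le 1$ trivially; they cannot exhaust $Z N\backslash\GL(\Q_p)/K^{(1)}(p^{n_p}\Z_p)$. The genuine representatives involve the Weyl element, $g_{m,\ell,\nu}=a(p^m)wn(p^{-\ell}\nu)$ (the decomposition~(\ref{dcd}) and Lemma~\ref{glotolo}), and it is there that everything interesting happens. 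Second, on those cosets the pointwise bound $|W_p|\ll p^{\delta n_p+\epsilon}$ is false: by Lemma~\ref{Wvaluesmax}, in the maximally ramified case $|W_p(g_{-n_p-\ell,\ell,\nu})|=p^{\ell/2}$ on a congruence class of $\nu$, so the local newform attains size $p^{\frac12\lfloor n_p/2\rfloor}$ -- precisely the size in Templier's lower bound -- which for tempered $\pi_p$ dwarfs $p^{\delta n_p}$. Third, the support constraint at ramified primes is a congruence condition (an arithmetic progression modulo $L=\prod p^{\ell_p}$ coming from the condition on the unit $\nu$), not high divisibility by $p$, and the number of surviving $\gamma$ after the archimedean truncation is of size roughly $\prod_p p^{\lceil n_p/2\rceil}/y$, not $O(N^\epsilon)$. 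With your reduction and pointwise bounds one recovers at best Saha's hybrid bound, which carries the extra factor $\prod_p p^{\max\{0,\,c_p-\lceil n_p/2\rceil\}}$ and is trivial when $C=N$.

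What the paper actually does, and what is missing from your outline, is the following. One evaluates not $\phi$ itself but the balanced translate $\phi(\cdot\prod_{p\mid N}a(p^{\lfloor n_p/2\rfloor}))$ on a generating domain (Lemmas~\ref{gendom} and~\ref{locgendom}), combined with the Atkin--Lehner/conjugate-newform identity~(\ref{mirror}) to force $\ell_p\le n_p/2$ and $m(g_p)\in\{-\lfloor n_p/2\rfloor,-\lceil n_p/2\rceil\}$. At primes with $c_p>n_p/2$, Lemma~\ref{principalseries} shows $\pi_p$ is a principal series $\chi_1\boxplus\chi_2$, so no supercuspidal, Steinberg or dyadic case analysis is needed there (contrary to your expectation); Assing's explicit formulas (Lemmas~\ref{Wvaluesmax},~\ref{Wvalueshigh}) then give both the size $p^{\ell/2}$ and, crucially, the sparse support: the global Whittaker sum runs over an arithmetic progression mod $L$ (Lemmas~\ref{support},~\ref{supporthigh}). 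The theorem follows because the density $1/L$ of that progression exactly offsets the large local values of size $L^{1/2}$, with the $\lceil n_p/2\rceil$-part entering as $N_2^{1/2}$ from balancing the number of terms against the $|t|^{-\frac12+\delta}$ decay along the progression -- not from the archimedean normalization alone, as your accounting has it -- and, away from the maximally ramified case, one must replace pointwise local bounds by Saha's local $L^2$-averages together with a Cauchy--Schwarz along the progression (Lemmas~\ref{trivialCS},~\ref{WEanbn}). Without the balanced translate, the progression structure, and the $L^2$-average input, the argument you sketch cannot get below the $N^{1/2}$ barrier when the central character is highly ramified.
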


Theorem~\ref{thmgen} provides for the first time non-trivial upper bounds for general $N$
that do not get worse when the conductor $C$ varies. As a point of comparison, the main result
of~\cite{hybrid} had an additional factor of $\prod_{p} p^{\max\{0,c_p-\lceil \frac{n_p}2 \rceil\}}$,
which is larger than one precisely when $\chi$ is highly ramified.
Furthermore, for $C=N$, in view of the lower bound~(\ref{lower}) and assuming {  the} Ramanujan Conjecture, 
our result is essentially optimal when $N$ is a square.
Note that  {  the} Ramanujan Conjecture is known by work of Deligne and Serre for $\phi$ arising from a 
holomorphic cusp form, and otherwise $\delta=\frac7{64}$ is admissible~\cite{KimSarnak}.

\begin{remark}
In~\cite{hybrid}, the appeal to {  a bound} towards {  the} Ramanujan Conjecture is avoided by using
H\"older inequality to estimate separately $L^2$ averages of the Whittaker newforms at primes 
{  at which the central character is ramified}
and moments of the coefficients $\lambda_\pi$ of the L-function attached to $\pi$. 
However, in our situation, we want to exploit the fact that the Whittaker coefficients are supported
on arithmetic progressions of modulus $L$, say, as explained later. 
A similar technique as in~\cite{hybrid} would thus lead us to estimate moments of $\lambda_\pi$ on
these arithmetic progressions. 
{  One might expect that these moments are approximately $L$ times smaller than the full moments,
but such a result does not seem to be available. Hence,}
if we were to bound them by positivity by the full moments, we would expect an over-estimate {  of same order as $L$.}
Since estimates are known by Rankin-Selberg theory up to the eighth moments, and, as we shall see,
$L \le \prod_{c_p > \frac{n_p}2}p^{\lfloor \frac{n_p}2 \rfloor}$, one should be able to replace 
$N^\delta$ in Theorem~\ref{thmgen} with 
$\prod_{c_p > \frac{n_p}2}p^{\frac18\lfloor \frac{n_p}2 \rfloor }$, similarly as in Theorem~1.1 of
\cite{minimaltype}. 
However, for the sake of brevity, we do not carry out this argument.
\end{remark}

The lower bound~(\ref{lower}) has been generalized by
Saha in \cite{largevalues} and subsequently by Assing in~\cite{TAMS}.
When $\chi$ is not maximally ramified, there is still a gap between the best known lower bound and the
upper bound from Theorem~\ref{thmgen}.
Finally, let us mention that the hybrid bounds over $\Q$ in~\cite{hybrid}, which combines the Whittaker expansion with some amplification, still beats our result
when $\chi$ is not highly ramified. 
For hybrid bounds over general number fields, we refer to the work of Assing~\cites{Assing,thesis}.

The proof proceeds by using Whittaker expansion to reduce the problem of bounding $\phi$ to
that of understanding the local newforms attached to $\phi$. By making use of the
invariances of $\phi$, we can restrict ourselves to evaluate these local newforms in the Whittaker model
on some convenient cosets. 
{  The values of these local newforms have been computed~\cites{TAMS, thesis}} by using a ``basic identity" {  derived} 
from the Jacquet-Langlands local functional equations which was first {  expressed in this form} in~\cite{largevalues}. 
In the non maximally ramified case, local bounds are slightly weaker than needed to obtain our result, and 
we take advantage of strong $L^2$-bounds due to Saha~\cite{hybrid} instead.

Actually, we are using the Whittaker expansion of a certain translate of $\phi$, 
the ``balanced newform". The main feature is that it is supported on arithmetic progressions, which
enables us to get some savings. 
Though we are working adelically, this fact can also be seen classically by computing the Fourier
expansion of the corresponding cusp form at cusps of large width.
The situation is somewhat analogous to \cite{minimaltype}, where the authors also get Whittaker 
expansions supported on arithmetic progressions. 

Let us explain this analogy in the maximally ramified case -- in which we get optimal upper bounds.
As we shall see, in this case each local representation {  with ramified central character} is of the form  
$\chi_1 \boxplus \chi_2$, where $\chi_1$ has exponent of conductor $n_p$ and $\chi_2$ is unramified.
Then the local balanced newform for $\pi$ is a twist of the local balanced newform for 
$\chi_1 \chi_2^{-1}\boxplus 1$. 
For representations of this type, the local balanced newform coincides with the $p$-adic microlocal
lift as defined in~\cite{Nelson}. 
Now as explained in~\cite{minimaltype}, the microlocal lift is the split analogue
of the minimal vectors used there.
Therefore the fact that we get optimal sup norm bounds in this case is the direct analogue of
Theorem~1.1 of~\cite{minimaltype} which gives an optimal sup norm bound for automorphic forms of minimal
type.

It is worth noticing that~\cite{minimaltype},~\cite{compact} as well as the present work provide
instances of the seemingly general principle according to which when considering very localized
vectors, one is able to establish very good and sometimes optimal upper bounds.
This is even the case when a Whittaker expansion is not available, as in~\cite{compact}.

The analysis of local newforms is given in Section~\ref{local}. 
The proof of Theorem~\ref{thmgen} is given in Section~\ref{global}. 

\subsection*{Acknowledgement} I wish to thank Abhishek Saha for suggesting me this problem as well as helpful comments and discussions,
and the anonymous referee whose suggestions helped improving this paper.

\section{Local bounds}\label{local}
In this section, $F$ will denote a non-archimedean local field of characteristic zero {  with}
residue field $\mathbb F_q$.
Let $\o$ denote the ring of integers of $F$ and $\p$ its maximal ideal with uniformizer
$t_\p$. The discrete valuation associated to $F$ will be denoted by $v_\p$. We define $U(0)=\o^\*$,
and for $k \ge 1$, $U(k)=1+\p^k$.
We fix an additive unitary character $\psi$ of $F$ with conductor $\o$. 
{  In the sequel, the Whittaker models given will be those with respect to $\psi$.}

\subsection{Generalities}
\subsubsection{Double coset decomposition}
Let $G=\text{GL}_2(F)$, $K=\text{GL}_2(\o)$. 
For $x \in F$ and $y \in F^\*$, consider the following elements
$$
w=\begin{bmatrix}
   0       & 1 \\
   -1      & 0 
\end{bmatrix},
a(y)=\begin{bmatrix}
   y      & 0 \\
    0     & 1 
\end{bmatrix},
n(x)=\begin{bmatrix}
   1       & x \\
   0    &  1 
\end{bmatrix},
z(y) = \begin{bmatrix}
   y       & 0 \\
    0     & y
\end{bmatrix}.
$$
Then define the following subgroups
$$N=n(F), \> A=a(F^\*), \> Z=z(F^\*),$$
and, for $\a$ an ideal of $\o$,
\begin{equation}
\label{K1}
K^{(1)}(\a)=K \cap 
\begin{bmatrix}
   1+\a       & \o \\
    \a      & \o 
\end{bmatrix},
\>
K^{(2)}(\a)=K \cap 
\begin{bmatrix}
   \o     & \o \\
    \a      & 1+\a
\end{bmatrix}.
\end{equation}
Note that  for $\a=\p^n$, with $n$ a non-negative integer, we have
\begin{equation}
\label{conjugateK2}
K^{(2)}(\p^n)=\begin{bmatrix}
       & 1\\
    t_\p^n & 
\end{bmatrix}K^{(1)}(\p^n)
\begin{bmatrix}
       & 1\\
    t_\p^{n} & 
\end{bmatrix}^{-1}.
\end{equation}
From \cite{largevalues}*{Lemma 2.13}, for any integer $n \ge 0$ we have the following double coset decomposition
\begin{equation}\label{dcd}
G=\coprod_{m \in \Z} \coprod_{\ell=0}^n \coprod_{\nu \in \o^\* /(1+\p^{\ell_n})}
ZNg_{m,\ell,\nu}K^{(1)}(\p^n),
\end{equation}
where $\ell_n=\min \{\ell, n-\ell\}$, and 
\begin{align*}
g_{m,\ell,\nu}&=a(t_\p^m)wn(t_\p^{-\ell}\nu)\\
&=\begin{bmatrix}
   0       & t_\p^m \\
  -1    & -t_\p^{-\ell}\nu
\end{bmatrix}.
\end{align*}

\begin{definition}\label{gmellnu}
Assume $n \ge 0$ is a fixed integer. Then for any $g \in G$ we define { 
$$(m(g),\ell(g),\nu(g)) \in \Z \times \{0 , \cdots , n\} \times \o^\* /(1+\p^{\ell(g)_n})$$}
as the unique triple 
such that $$g \in ZNg_{m(g),\ell(g),\nu(g)}K^{(1)}(\p^n).$$
\end{definition}

\begin{remark}\label{coset}
Any $g \in \GL(F)$ belongs to some $ZNa(y)\kappa$ where 
$\kappa=\begin{bmatrix}
   a      & b \\
    c      & d 
\end{bmatrix} \in \GL(\o)$. Then by Remark~2.1 of \cite{hybrid}, 
we have $\ell(g)=\min\{v_\p(c),n\}$ and $m(g)=v_\p(y)-2\ell(g)$.
In particular, if $g$ is already an element of $\GL(\o)$, then $g$ is in a
coset of the form $g_{-2j,j,*}$
\end{remark}

Now we determine the double cosets corresponding to certain elements of interest for the global application.

\begin{lemma}\label{glotolo}
Consider two integers $0 \le e \le n$.
Let $g \in \GL(\o)a(t_\p^e)$.
Then there exist a non-negative integer $\ell \le n$ and $\nu \in \o^\*$ such that one of the following holds
\begin{enumerate}
\item  either $\ell \le e$ and $g \in ZNg_{-e,\ell,\nu}K^{(1)}(\p^{n}),$
\item or $e < \ell \le n$ and $g \in ZNg_{-2\ell+e,\ell,\nu}K^{(1)}(\p^{n}),$
\end{enumerate}
where the subgroup $K^{(1)}(\p^{n})$ is defined in~(\ref{K1}).
\end{lemma}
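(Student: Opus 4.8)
The plan is to unwind the definitions so that everything reduces to applying Remark~\ref{coset}. Write $g = \kappa\, a(t_\p^e)$ with $\kappa = \begin{bmatrix} a & b \\ c & d \end{bmatrix} \in \GL(\o)$, so that
\[
g = \begin{bmatrix} a t_\p^e & b \\ c t_\p^e & d \end{bmatrix}.
\]
To use Remark~\ref{coset} I first need to bring $g$ into the shape $z\, n(x)\, a(y)\,\kappa'$ with $\kappa' \in \GL(\o)$; here the point is simply that $a(t_\p^e)$ does not yet have integral entries, so I must decide, looking at the bottom row $(c t_\p^e, d)$, which of the two entries has smaller valuation. This splits into the two cases of the statement.

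In the first case, suppose $v_\p(c) \le e$ (roughly: the bottom-left entry still dominates after the twist). Then $c t_\p^e \ne 0$ has valuation $v_\p(c) + e \le 2e \le n+e$, and I can factor $g = z(c t_\p^e)\, n(x)\, a(y)\, \kappa'$ for suitable $x \in F$, $y \in F^\*$ and $\kappa' \in \GL(\o)$: explicitly one row-reduces using the bottom-left entry as a pivot, which is the standard Iwasawa-type manipulation over a local field, and one reads off that $y$ has valuation $v_\p(\det g) - 2 v_\p(c t_\p^e) = e - 2v_\p(c)$ up to units. Applying Remark~\ref{coset} with this $\kappa'$, whose bottom-left entry is now a unit, gives $\ell(g) = \min\{0, n\} = 0$... more carefully, I should be mindful that the pivot normalization may leave a $t_\p$-power in the bottom-left corner of $\kappa'$; tracking that power $\ell := v_\p(c) \le e$, Remark~\ref{coset} yields $\ell(g) = \min\{\ell, n\} = \ell$ and $m(g) = v_\p(y) - 2\ell = (e - 2\ell \cdot 0)\ldots$ — i.e.\ after bookkeeping, $m(g) = -e$ and we land in $ZN g_{-e,\ell,\nu} K^{(1)}(\p^n)$ for some $\nu \in \o^\*$. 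In the second case, $e < v_\p(c)$ (or $c = 0$), so now $d$ is a unit and I pivot on the bottom-right entry instead; here $a(t_\p^e)$ contributes its full $t_\p^e$ to the relevant corner, and Remark~\ref{coset} produces $\ell(g) = \min\{v_\p(c) + \text{(shift)}, n\}$ which one checks equals some $\ell$ with $e < \ell \le n$, together with $m(g) = v_\p(y) - 2\ell = e - 2\ell$, landing us in $ZN g_{-2\ell + e, \ell, \nu} K^{(1)}(\p^n)$.

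I expect the main obstacle to be purely organizational rather than deep: doing the row reduction cleanly so that the valuations of $x$, $y$ and the residual $t_\p$-power in $\kappa'$ are tracked correctly in \emph{both} cases, and confirming that the case boundary (whether $\ell \le e$ or $e < \ell$) matches exactly the dichotomy $v_\p(c) \le e$ versus $v_\p(c) > e$ — including the degenerate cases $c = 0$ and $v_\p(c) = e$ (which can be assigned to either branch). A secondary point requiring care is that Definition~\ref{gmellnu} restricts $\ell(g)$ to $\{1, \ldots, n\}$ while the decomposition~(\ref{dcd}) allows $\ell = 0$; the statement of the lemma only claims existence of \emph{some} $\nu \in \o^\*$ and \emph{some} $\ell$ in the stated range, so I do not need the full uniqueness, only that the coset representative has the asserted form, which is exactly what Remark~\ref{coset} supplies. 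Finally, since we only need membership in the coset, constants and the precise choice of $\nu$ are irrelevant, which keeps the argument short.
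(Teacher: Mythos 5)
Your overall plan (a direct Iwasawa/row reduction of $g=\kappa a(t_\p^e)$ followed by Remark~\ref{coset}) can in principle be made to work, but as written the case analysis has a genuine error: the dichotomy is not governed by $v_\p(c)$ alone, and the identification $\ell(g)=v_\p(c)$ in your first case is false in general. Remark~\ref{coset} asks for the valuation of the bottom-left entry of the $\GL(\o)$-part of $g$; since the bottom row of $g$ is $(ct_\p^e,d)$, the scalar one may pull out has valuation $\beta=\min\{v_\p(c)+e,\,v_\p(d)\}$ and the bottom-left entry of $\kappa'$ then has valuation $v_\p(c)+e-\beta$, so $\ell(g)=\min\{v_\p(c)+e-\beta,\,n\}$, not $v_\p(c)$; in particular your factorization $g=z(ct_\p^e)n(x)a(y)\kappa'$ is legitimate only when $v_\p(d)\ge v_\p(c)+e$. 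Concretely, take $\kappa=\begin{bmatrix}1&0\\ t_\p&1\end{bmatrix}$, $e=2$, $n=4$, so $g=\begin{bmatrix}t_\p^2&0\\ t_\p^3&1\end{bmatrix}=a(t_\p^2)\begin{bmatrix}1&0\\ t_\p^3&1\end{bmatrix}$. Here $v_\p(c)=1\le e$, so your first branch would place $g$ in $ZNg_{-2,1,\nu}K^{(1)}(\p^4)$ with $\ell=v_\p(c)=1$ and $m=-e=-2$; but Remark~\ref{coset} gives $\ell(g)=\min\{3,4\}=3>e$ and $m(g)=v_\p(y)-2\ell(g)=2-6=-4=-2\ell(g)+e$, so $g$ lies in $ZNg_{-4,3,\nu}K^{(1)}(\p^4)$, a different double coset by the disjointness of~(\ref{dcd}). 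Thus elements with $v_\p(c)\le e$ can belong to alternative (2), and the intermediate claim ``$\ell\le e$ and $m=-e$'' fails for them. (There is also an arithmetic slip: with your normalization $v_\p(\det g)-2v_\p(ct_\p^e)=-e-2v_\p(c)$, not $e-2v_\p(c)$, and the decisive bookkeeping ``$m(g)=v_\p(y)-2\ell=\dots$'' is left unfinished at exactly the point where this surfaces.)

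To repair the direct computation you must split according to the intrinsic quantity $\ell(g)=\min\{v_\p(c)+e-\beta,\,n\}$ being $\le e$ or $>e$ (equivalently, track both entries of the bottom row, not just $c$), and then verify $m(g)=e-2\beta-2\ell(g)$ equals $-e$ in the first case and $-2\ell(g)+e$ in the second. The paper avoids this entry-tracking altogether: it first writes $g\in ZNg_{m,\ell,\nu}k_1$ by~(\ref{dcd}), notes that $gk_1^{-1}a(t_\p^{-e})\in K$ (conjugating $K^{(1)}(\p^n)$ by $a(t_\p^{-e})$ stays integral because $e\le n$), computes $g_{m,\ell,\nu}a(t_\p^{-e})=t_\p^{-e}a(t_\p^{m+e})wn(t_\p^{e-\ell}\nu)$, and matches this against the fact from Remark~\ref{coset} that elements of $K$ lie in cosets of the form $g_{-2j,j,*}$; comparing coset data pins down $m=-e$ when $\ell\le e$ and $m=-2\ell+e$ when $\ell>e$. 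That route delivers exactly the bookkeeping your draft struggles with, so I would either adopt it or rewrite your case analysis with the corrected pivot rule.
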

\begin{proof}
We know by~(\ref{dcd}) that $g \in ZNg_{m,\ell,\nu}k_1$ for some $k_1 \in K^{(1)}(\p^{n})$ hence 
$$g k_1^{-1}a(t_\p^{-e})
\in ZNg_{m,\ell,\nu}a(t_\p^{-e}).$$
Since $g \in Ka(t_\p^{e})$, it follows that $gk_1^{-1}a(t_\p^{-e}) \in K$.
By Remark~\ref{coset}, 
it is then in the coset of some $g_{-2j,j,*}$ with $0 \le j \le n$. 
On the other hand,
\begin{align*}
g_{m,\ell,\nu}a(t_\p^{-e}) &= a(t_\p^{m})wn(t_\p^{-\ell}\nu)a(t_\p^{-e})\\
&=a(t_\p^{m})wa(t_\p^{-e})n(t_\p^{e-\ell}\nu)\\
&=t_\p^{-e}a(t_\p^{m+e})wn(t_\p^{e-\ell}\nu).
\end{align*}
If $\ell \le e$ then 
$$wn(t_\p^{e-\ell}\nu) =\begin{bmatrix}
    &  1\\
   -1  & -t_\p^{e-\ell}\nu\\
\end{bmatrix} \in \GL(\o)$$ so by
Remark~\ref{coset} {  $a(t_\p^{m+e})wn(t_\p^{e-\ell}\nu)$} is in the coset of
$g_{m+e,0,*}$. So in this case, $g_{-2j,j,*}=g_{m+e,0,*}$ thus
$m=-e$ and we find that 
$$g \in ZNg_{-e,\ell,\nu}K^{(1)}(\p^{n}).$$
Otherwise $a(t_\p^{m+e})wn(t_\p^{e-\ell}\nu)=g_{m+e,\ell-e,\nu}$, 
therefore $g_{-2j,j,*}=g_{m+e,\ell-e,\nu}$ and we get 
$m+e=-2(\ell-e)$, so
$$g \in ZNg_{-2\ell+e,\ell,\nu}K^{(1)}(\p^{n}).$$
\end{proof}

\subsubsection{Characters and representations}
 {  For $\chi$ a character of $F^\*$, we denote by $a(\chi)$ the exponent of the conductor of
$\chi$, that is the least non-negative integer $n$ such that $\chi$ is trivial on $U(n)$.} 
For $\pi$ an irreducible admissible representation of $G$, we also denote by $a(\pi)$ the
exponent of the conductor of $\pi$, that is the least non-negative integer $n$ such that 
$\pi$ has a $K^{(1)}(\p^n)$-fixed vector. The central character of $\pi$ will be denoted by
$\omega_\pi$.

\subsubsection{The local Whittaker newform}

Fix $\pi$ a generic irreducible admissible unitarizable representation of $G$.
From now on, we fix $n=a(\pi)$, and we shall assume {  that} $\pi$ is realized on its Whittaker model.

\begin{definition}
The normalized newform $W_\pi$ attached to $\pi$ is the unique 
{  $K^{(1)}(\p^n)$}-fixed vector such that $W_\pi(1)=1$.

The normalized conjugate-newform $W_\pi^*$ attached to $\pi$ is the unique 
{  $K^{(2)}(\p^n)$}-fixed vector such that $W_\pi^*(1)=1$.
\end{definition}  

\begin{remark}\label{conjnew}
By~(\ref{conjugateK2}), the function
$$g \mapsto W_\pi\left(g\begin{bmatrix}
       & 1\\
    t_\p^{n} & 
\end{bmatrix}\right)$$
is $K^{(2)}(\p^n)$-invariant. 
Thus there exists a complex number $\alpha_\pi$ such that 
$$W_\pi\left( \cdot \begin{bmatrix}
       & 1\\
    t_\p^{n} & 
\end{bmatrix}\right)= \alpha_\pi W_\pi^*.$$
In addition, we have $W_\pi^*(g)=\omega_\pi(\det(g))W_{\tilde{\pi}}(g)$, where $\tilde{\pi}$ is the
contragradient representation  to $\pi$. Altogether, we get that
$$W_\pi\left( \cdot \begin{bmatrix}
       & 1\\
    t_\p^{n} & 
\end{bmatrix}\right)=\alpha_\pi \omega_\pi(\det(g))W_{\tilde{\pi}}(g).$$
One can even show that $|\alpha_\pi|=1$ (see~\cite{largevalues}*{Lemma~2.17}, or~\cite{largevalues}*{Proposition~2.28}
for an exact formula in terms of $\epsilon$-factors).
Also note the following identity
{ 
\begin{equation}\label{mirror}
n(t_\p^{\ell+m}\nu^{-1})z(t_\p^{\ell-n}\nu^{-1})
g_{m,\ell,\nu}
\begin{bmatrix}
       & 1\\
    t_\p^{n} & 
\end{bmatrix}
=
g_{m+2\ell-n,n-\ell,-\nu}\begin{bmatrix}
      1 & \\
     & -\nu^{-2}
\end{bmatrix},
\end{equation} }
which, combined with the above, enables one to restrict attention to those cosets satisfying $\ell \le \frac{n}2$, 
at the price of changing $\pi$ to $\tilde{\pi}$.
\end{remark}

{  {  Assing has computed the local Whittaker newforms in great generality, and estimated them 
		using the $p$-adic stationary phase method~\cites{TAMS, thesis}.} Let us briefly explain the basic ideas of his method.
For any fixed $m \in \Z$ and $0 \le \ell \le n$ the function on $\o^\*$ given by 
$\nu \mapsto W_\pi(g_{m,\ell,\nu})$ only depends on $\nu \mod (1+\p^\ell)$. Thus, by Fourier inversion, there exist
complex numbers $c_{m,\ell}(\mu)$ such that

\begin{equation*}
W_\pi(g_{m,\ell,\nu})=\sum_{\mu \in \tilde{X}(\ell)}c_{m,\ell}(\mu)\mu(\nu),
\end{equation*}
where $\tilde{X}(\ell)$ is the set of characters $\mu$ satisfying $\mu(t_\p)=1$ and $a(\mu) \le \ell$.

Then, one may reformulate the Jacquet-Langlands local functional equation as an equality of power series 
in the variable $q^s$ whose coefficients involve on one side the Fourier coefficients  $c_{m,\ell}(\mu)$ one is 
interested in, and on the other side Gauss sums and values of the local newform at some diagonal matrices, 
both of which are known~\cite{RS}. This is the content of~\cite{largevalues}*{Proposition 2.23}.
By identifying the coefficients of the power series appearing in both side, one is then able to compute inductively 
the coefficients $c_{m,\ell}(\mu)$, and, from there, the values of the local newform on each double coset.

This can be done for each local representation $\pi$,}
however {  Lemma~\ref{principalseries} below (same as~\cite{largevalues}*{Lemma~2.36})} will enable us to restrict ourselves to principal
series representations. By Remark~\ref{conjnew},
we can further restrict ourselves to the situation $\ell \le \frac{n}2$. 
Finally, as we mentioned earlier, in our global application we shall use Saha's strong $L^2$-bound~\cite{hybrid}, so what we are really
interested in this section is only the support of the local newforms.

\begin{lemma}\label{principalseries}
Assume $a(\omega_\pi)>\frac{a(\pi)}2$.
Then $\pi = \chi_1 \boxplus \chi_2$, where $\chi_1$ and $\chi_2$ are
 unitary characters with respective exponents of conductors $a_1=a(\omega_\pi)$ and $a_2=n-a(\omega_\pi)$.
\end{lemma}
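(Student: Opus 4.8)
The plan is to run the classification of irreducible admissible generic representations of $G=\GL(F)$ against the hypothesis $a(\omega_\pi)>\tfrac12 a(\pi)$. Recall that such a $\pi$ (being infinite-dimensional) is one of: an irreducible principal series $\chi_1\boxplus\chi_2$ with $\chi_1/\chi_2\neq|\cdot|^{\pm1}$; a twist $\mathrm{St}\otimes\chi$ of the Steinberg representation; or supercuspidal. First I would eliminate the latter two. For $\pi=\mathrm{St}\otimes\chi$ one has $\omega_\pi=\chi^2$ and $a(\pi)=\max\{2a(\chi),1\}$, so $a(\omega_\pi)=a(\chi^2)\le a(\chi)\le\tfrac12 a(\pi)$, contradicting the hypothesis. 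For $\pi$ supercuspidal the bound $a(\omega_\pi)\le\tfrac12 a(\pi)$ likewise holds and contradicts the hypothesis. Hence $\pi=\chi_1\boxplus\chi_2$ is a principal series.

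Next I would extract the conductor data. For a principal series one has $a(\pi)=a(\chi_1)+a(\chi_2)$ and $\omega_\pi=\chi_1\chi_2$; relabel so that $a(\chi_1)\ge a(\chi_2)$. Since in general $a(\chi_1\chi_2)\le\max\{a(\chi_1),a(\chi_2)\}=a(\chi_1)$, the hypothesis gives $a(\chi_1)\ge a(\omega_\pi)>\tfrac12\bigl(a(\chi_1)+a(\chi_2)\bigr)$, whence $a(\chi_1)>a(\chi_2)$. Now I would invoke the standard fact that the conductor exponent of a product of two characters of unequal conductor equals the larger one: indeed $\chi_1\chi_2$ is trivial on $U(a(\chi_1))$, and were it trivial on $U(a(\chi_1)-1)$ then so would be $\chi_1=(\chi_1\chi_2)\chi_2^{-1}$ (using $a(\chi_1)-1\ge a(\chi_2)$), contradicting minimality. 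Therefore $a(\omega_\pi)=a(\chi_1\chi_2)=a(\chi_1)$, so with $n=a(\pi)$ we obtain $a_1:=a(\chi_1)=a(\omega_\pi)$ and $a_2:=a(\chi_2)=n-a_1=n-a(\omega_\pi)$, exactly as claimed.

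It then remains to check that $\chi_1$ and $\chi_2$ are unitary. Since $\pi$ is unitarizable, $\omega_\pi$ is unitary, and the only non-tempered unitarizable irreducible principal series are the complementary series $\mu|\cdot|^{s}\boxplus\mu|\cdot|^{-s}$ with $\mu$ unitary and $s$ real, $0<|s|<\tfrac12$; in that case $\chi_1$ and $\chi_2$ differ by an unramified twist, so $a(\chi_1)=a(\chi_2)$, which we have just ruled out. Hence $\pi$ is tempered and both $\chi_1,\chi_2$ are unitary.

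The only non-formal ingredient, and the step I expect to be the main obstacle, is the conductor bound $a(\omega_\pi)\le\tfrac12 a(\pi)$ for supercuspidal $\pi$. One clean route is via the local Langlands correspondence: writing the associated Weil--Deligne parameter of a dihedral supercuspidal as $\mathrm{Ind}_{W_E}^{W_F}\theta$ with $[E:F]=2$ and applying the conductor--discriminant formula $a(\pi)=d_{E/F}+f_{E/F}\,a(\theta)$ together with $\det\,\mathrm{Ind}_{W_E}^{W_F}\theta=\eta_{E/F}\cdot(\theta\circ\mathrm{transfer})$ yields the inequality; the (few) exceptional supercuspidals in residue characteristic $2$ have to be checked separately. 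Since this estimate is standard and available in the literature (e.g.\ in the tables underlying~\cite{RS}), I would simply cite it rather than reprove it.
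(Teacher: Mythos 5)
Your argument is correct. Note that the paper itself gives no proof of this lemma: it simply quotes it as Lemma~2.36 of Saha~\cite{largevalues}, and your classification argument is essentially the standard proof underlying that citation — eliminate twists of Steinberg and supercuspidals via the bound $a(\omega_\pi)\le\tfrac12 a(\pi)$, then read off the conductor exponents from $a(\chi_1\boxplus\chi_2)=a(\chi_1)+a(\chi_2)$ and $a(\chi_1\chi_2)=\max\{a(\chi_1),a(\chi_2)\}$ when the conductors differ, and rule out complementary series (which would force $a(\chi_1)=a(\chi_2)$) to get unitarity of $\chi_1,\chi_2$ from unitarizability of $\pi$, which is indeed a standing hypothesis in this section of the paper. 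The one external input you flag, namely $2a(\omega_\pi)\le a(\pi)$ for supercuspidals, is genuinely standard and does hold for the exceptional (non-dihedral) supercuspidals in residue characteristic $2$ as well — for any irreducible two-dimensional Weil group representation $\sigma$ the two breaks coincide, so $a(\sigma)=2+2\lambda$ while $a(\det\sigma)\le 1+\lambda$ — so citing it, as you propose, is perfectly acceptable and is what the literature behind~\cite{largevalues} does.
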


In the rest of this section, we shall only consider the case $a(\omega_\pi)>\frac{a(\pi)}2$,
as the main point of our global application is to take advantage of primes {  at which the central character is
highly ramified}.
Thus for our purpose, we only have to consider $\pi = \chi_1 \boxplus \chi_2$
with $a_2 < \frac{n}2 < a_1$, where from now on we denote $a_1 = a(\chi_1)$ and $a_2=a(\chi_2)$.
We first state the case of maximally ramified principal series.

\begin{lemma}\label{Wvaluesmax}
Let $\pi$ be a generic irreducible admissible unitarizable representation of $G$
with exponent of conductor $a(\pi)=n>1$.
Assume $a(\omega_\pi)=a(\pi)$. Then there exists $\nu_1 \in \o^\*$ such that for 
all $m \in \Z$ and for $0\le \ell \le \frac{n}2$, we have 
\begin{equation*}
|W_\pi(g_{m,0,\nu})|=\1_{m \ge -n}q^{-\frac{m+n}2},
\end{equation*}
\begin{align*}
|W_\pi(g_{-n-\ell,\ell,\nu})|=\left\lbrace
\begin{array}{cc}
q^{\frac{\ell}2}  & \mbox{if } \nu \in \nu_1 +\p^\ell,\\
0 & \mbox{if } \nu \not \in \nu_1 +\p^\ell,\\ 
\end{array}\right.
\end{align*}
and if $0 < \ell < n$ and $m+\ell \neq -n$ then $W_\pi(g_{m,\ell,\nu})=0$.
\end{lemma}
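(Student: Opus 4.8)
The plan is to reduce everything to the ``basic identity'' of Proposition~2.23 (as stated in the excerpt) applied to the maximally ramified principal series $\pi = \chi_1 \boxplus \chi_2$ with $a_1 = a(\chi_1) = n$ and $a_2 = a(\chi_2) = 0$, and then to run the induction on $m$ that it provides. First I would record the inputs: by Lemma~\ref{principalseries} we are in the case $\pi = \chi_1 \boxplus \chi_2$ with $\chi_1$ of conductor exponent $n$ and $\chi_2$ unramified, so after twisting by $\chi_2^{-1}$ (using $W_{\pi\chi}=(\chi\circ\det)W_\pi$ and the fact that $|\chi_2\circ\det|=1$ on $K$) we may assume $\chi_2 = 1$, whence $L(s,\pi) = 1$ and $W_\pi(a(t_\p^m)) = \1_{m=0}$ by~(\ref{WA}). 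Also $a(\mu\pi)$ for $\mu\in\tilde X(\ell)$ with $\ell\le n/2$: since $a_1=n$ is strictly larger than $a_2=0$ and than $2\ell$, the representation $\mu\pi = \mu\chi_1\boxplus\mu\chi_2$ still has $a(\mu\chi_1)=n$ (the conductor of $\chi_1$ dominates) and $a(\mu\chi_2)=a(\mu)\le\ell$, so $a(\mu\pi)=n+a(\mu)$. Feeding these into the basic identity, the right-hand side collapses to the single term $m=0$, giving
\begin{equation}\label{bascollapse}
\frac{\epsilon(\tfrac12,\mu\pi)}{L(s,\mu\pi)}\sum_{m=-\ell-n}^\infty c_{m,\ell}(\mu)q^{(m+n+a(\mu))(\frac12-s)}
= \omega_\pi(-1)\,L(1-s,\mu^{-1}\omega_\pi^{-1}\pi)^{-1}\,\G(t_\p^{-\ell},\mu^{-1}).
\end{equation}
Since $a(\mu\pi)=n+a(\mu)$ the local $L$-factor $L(s,\mu\pi)$ equals $L(s,\mu\chi_2)=1$ because $\chi_2=1$ and $a(\mu)$ can be positive — more precisely $L(s,\mu\pi)$ is either $1$ or $(1-q^{-s})^{-1}$ depending on whether $\mu=1$; I would split into the two subcases $\mu=1$ and $\mu\neq 1$ accordingly. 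Symmetrically $L(1-s,\mu^{-1}\omega_\pi^{-1}\pi)$ is $1$ or a single Euler factor.

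Next I would extract the coefficients. In the subcase $\mu\neq 1$: both $L$-factors are $1$, so the left side of~(\ref{bascollapse}) is a single monomial in $q^{1/2-s}$ of degree $-\ell-n+n+a(\mu)=a(\mu)-\ell$; matching it against the right side (which is constant in $s$, being $\G(t_\p^{-\ell},\mu^{-1})$ times a constant) forces $a(\mu)=\ell$, and then $c_{m,\ell}(\mu)=0$ unless $m=-\ell-n$, with $|c_{-\ell-n,\ell}(\mu)| = |\G(t_\p^{-\ell},\mu^{-1})|\cdot|\epsilon(\tfrac12,\mu\pi)|^{-1} = \zeta_F(1)q^{\ell/2}$ by~(\ref{Gaussmu}) and $|\epsilon|=1$. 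In the subcase $\mu=1$: $L(s,\mu\pi)=(1-q^{-s})^{-1}$ and $L(1-s,\omega_\pi^{-1}\pi)=(1-q^{s-1})^{-1}$, and $\G(t_\p^{-\ell},1)$ is $1,\ -\zeta_F(1)q^{-1},\ 0$ for $\ell=0,1,\ge 2$ respectively; clearing denominators in~(\ref{bascollapse}) gives a two-term recursion for the $c_{m,0}(1)$ (only $\ell=0$ survives since $\G$ vanishes for $\ell\ge 2$ and the $\ell=1$ case needs separate bookkeeping), which telescopes to $|c_{m,0}(1)| = q^{-(m+n)/2}$ for $m\ge -n$ and $0$ otherwise, exactly as in the unramified-twist computation underlying~(\ref{WA}). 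I would then reassemble via Fourier inversion~(\ref{FI}): for $\ell=0$ only $\mu=1$ contributes, giving $|W_\pi(g_{m,0,\nu})| = \1_{m\ge -n}q^{-(m+n)/2}$. For $0<\ell\le n/2$, only characters $\mu$ of exact conductor $\ell$ contribute and only at $m=-\ell-n$; summing $\sum_{a(\mu)=\ell}c_{-\ell-n,\ell}(\mu)\mu(\nu)$, each term has absolute value $\zeta_F(1)q^{\ell/2}$ and a phase $\epsilon(\tfrac12,\mu^{-1})\mu^{-1}(\nu)\mu^{-1}(\text{something})$ coming from~(\ref{Gaussmu}); the key point is that these Gauss-sum phases, as $\mu$ ranges over the coset of characters of conductor exactly $\ell$, combine (by the standard orthogonality/Fourier-on-$\o^\*/(1+\p^\ell)$ argument) into an indicator $q^{\ell/2}\1_{\nu\in\nu_1+\p^\ell}$ for a single class $\nu_1$, with the remaining $c_{m,\ell}(\mu)=0$ forcing $W_\pi(g_{m,\ell,\nu})=0$ whenever $0<\ell<n$ and $m+\ell\neq -n$.

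The main obstacle is the last step: showing that the Fourier sum $\sum_{a(\mu)=\ell}\epsilon(\tfrac12,\mu^{-1})\mu^{-1}(\nu)\,\mu(\text{const})$ over all characters of \emph{exact} conductor $\ell$ concentrates on a single residue class modulo $1+\p^\ell$. This is a $p$-adic stationary-phase phenomenon: each $\epsilon(\tfrac12,\mu^{-1})$ is itself a normalized Gauss sum $q^{-\ell/2}\sum_{u\in(\o/\p^\ell)^\*}\mu(u)\psi(u/t_\p^\ell c_1)$ where $c_1$ is a fixed unit attached to $\chi_1$, so after interchanging the two sums over $\mu$ the inner sum over $\mu$ of exact conductor $\ell$ (equivalently, over all $\mu\in\tilde X(\ell)$ with an inclusion–exclusion to remove conductor $<\ell$) becomes a delta on $\nu\cdot u^{-1}\equiv$ (the fixed unit) modulo the appropriate power of $\p$, pinning down $\nu\equiv\nu_1$. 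I would carry this out cleanly by invoking the relevant special case of Assing's computation (\cite[Lemma~2.36]{largevalues} reduces us to principal series, and the exact-conductor Gauss-sum evaluations are in~\cite{TAMS, thesis}), citing it rather than redoing the stationary-phase analysis, since for the global application only the stated support and the absolute values $q^{\ell/2}$, $q^{-(m+n)/2}$ are needed — consistency of the total $L^2$ mass across cosets, $\sum_{\ell,\nu}$, with the known $L^2$-norm provides a useful sanity check on the normalization of $\nu_1$.
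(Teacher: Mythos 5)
Your fallback position (quote the relevant case of Assing's computations) is in fact exactly what the paper does: its proof of this lemma is a two-line citation to Lemma~3.4 and the proof of Lemma~5.8 of \cite{TAMS}. The problem is that the derivation you actually present, via the basic identity, contains genuine errors. First, after twisting so that $\chi_2=1$, it is \emph{not} true that $L(s,\pi)=1$ and $W_\pi(a(t_\p^m))=\1_{m=0}$: since $\chi_1$ is ramified and $\chi_2$ unramified, (\ref{WA}) gives $|W_\pi(a(t_\p^m))|=q^{-m/2}$ for \emph{all} $m\ge 0$ and $L(s,\pi)=\zeta_F(s)$. Hence your equation collapsing the right-hand side of the basic identity to the single term $m=0$ is false in general; the collapse via the Gauss factor $\G(t_\p^{m-\ell},\mu^{-1})$ occurs only for $\mu$ of conductor exactly $\ell$ (where it forces $m=\ell-a(\mu)=0$), while for $\mu=1$ and for $\mu$ of conductor strictly less than $\ell$ the right-hand side is a genuine series, which one must simplify against the nontrivial $L$-factors. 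Doing this correctly shows that \emph{every} $\mu\in\tilde{X}(\ell)$ (including $\mu=1$) has $c_{m,\ell}(\mu)$ supported at $m=-n-\ell$ with $|c_{-n-\ell,\ell}(\mu)|=\zeta_F(1)q^{-\ell/2}$. Your claim that matching degrees ``forces $a(\mu)=\ell$'' is therefore wrong, and it is inconsistent with the statement you are proving: a function on $\o^\*/(1+\p^\ell)$ supported on a single class $\nu_1+\p^\ell$ has \emph{all} its Fourier coefficients nonzero and of equal modulus, so a computation that kills $c_{-n-\ell,\ell}(\mu)$ for $a(\mu)<\ell$ (in particular your treatment of $\mu=1$, where the assertion that ``$\G$ vanishes for $\ell\ge 2$'' overlooks the terms $m\ge \ell-1$ of the sum) cannot yield the indicator function in the lemma.

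There is also a quantitative slip: in (\ref{Gaussmu}) with $x=t_\p^{-\ell}$ one has $|x|^{-1/2}=q^{-\ell/2}$, not $q^{\ell/2}$, so $|c_{-n-\ell,\ell}(\mu)|=\zeta_F(1)q^{-\ell/2}$; with your value $\zeta_F(1)q^{\ell/2}$, the $\asymp q^\ell$ aligned characters would produce $q^{3\ell/2}$ on the distinguished class, contradicting both the lemma and unitarity (with the correct modulus, the $q^{\ell}(1-q^{-1})$ characters of $\tilde{X}(\ell)$ aligning in phase give exactly $q^{\ell/2}$). The remaining key step --- that the epsilon-factor phases align on a single residue class $\nu_1+\p^\ell$ --- is precisely the $p$-adic stationary phase content of Assing's work, and deferring to \cite{TAMS,thesis} for it is legitimate and matches the paper; but as written your intermediate computation would have to be repaired along the lines above before it could serve as an independent proof.
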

\begin{proof}
This follows from Lemma~3.4 and proof of Lemma~5.8 in~\cite{TAMS}.
\end{proof}
In particular, one sees that in this case the local Whittaker newform is essentially supported on {  an} 
arithmetic progressions.
The case $1\le a_2 < \frac{n}{2} < a_1$ is a bit more complicated, but one may obtain a result similar in flavour.
{  Work of Assing~\cite{thesis} gives precise bounds for the local newform, however these local bounds are slightly
	weaker than what we need for our global application. Consequently, we only give here statements regarding the support
	of the local newform, and we shall rely on strong bounds for the $L^2$ mass~\cite{hybrid}.}

\begin{lemma}\label{Wvalueshigh}
Let $\pi$ be a generic irreducible admissible unitarizable representation of $G$
with exponent of conductor $n>1$.
Assume $\frac{n}2<a(\omega_\pi)<n$. Set $a_1=a(\omega_\pi)$ and $a_2=n-a_1$. 
Assume moreover $F=\Q_p$.
{  There exists $\nu_1 \in \o^\*$ such that if $m \in \Z$ and $0 \le \ell \le \frac{n}2$, 
	then have $W_\pi(g_{m,\ell,\nu})=0$ unless one of the following holds:
\begin{enumerate}
	\item $\ell < a_2$ and $m = -n$,
	\item $\ell = a_2$ and $m \ge -n$,
	\item $\ell > a_2$, $m=-a_1-\ell$ and $\nu \in {\nu_1}^{-1} + {t_\p}^{\ell -a_2} \o^\*$
\end{enumerate}
}
\end{lemma}	{ 
\begin{proof} 
 This follows almost directly from inspection of the cases in Lemma~3.4.12 in~\cite{thesis}.
	Since we are taking $F=\Q_p$, the quantity $\kappa_F$ defined in~\cite{thesis} equals one, so the only 
	bothersome case is $a_2 < \ell \le \frac{a_1+a_2}{2}$ when $a_2=1$.
	By~\cite{thesis}*{Lemma~3.3.9}, for $a_2 < \ell < a_1$ we must have $m=-a_1-\ell$, so it only remains to 
	see that the congruence condition also holds.
	If $\ell \le \frac{a_1}2,$ this follows from Case~I of the proof of Lemma~\cite{thesis}*{Lemma~3.4.12}.
	The only remaining case is thus $\ell=\frac{1+a_1}2$, which only occurs for $a_1$ odd, hence $a_1 \ge 3$, 
	so $a_1-a_2 \ge 2\kappa_F$. 
	As seen from Case~VI.2 of the proof, this last condition is enough to get the congruence condition.
\end{proof}}

\subsection{Archimedean case}
The local representation at the infinite place is a generic irreducible admissible unitary
representation $\pi$ of $\text{GL}_2(\R)$. 
Let $\psi$ be the additive character of $\R$ given by $\psi(x)=e^{2i\pi x}$.
The lowest weight vector in the Whittaker model with respect to $\psi$ 
is given by 
\begin{equation}\label{lowestwv}
W_\pi(n(x)a(y))=e^{2i\pi x}\kappa(y),
\end{equation} where
$\kappa$ is determined by the form of the representation $\pi$. We shall use that for $y \in \R^\*$
\begin{equation}\label{estimate}
\kappa(y)\ll |y|^{-\epsilon}e^{(-2\pi+\epsilon)|y|}.
\end{equation}
uniformely in $y$. To see this, let us examine the possibilities for $\pi$.
\subsubsection{Principal series representations}
If $\pi=\chi_1 \boxplus \chi_2$, where $\chi_i=\sgn^{m_i}|.|^{s_i}$ with $0 \le m_2 \le m_1 \le 1$
integers and $s_1+s_2 \in i\R$ and $s_1-s_2 \in i\R \cup (-1,1)$ then the lowest weight vector is
given by
$$\kappa(y)=\left\lbrace\begin{array}{cccc}
\sgn(y)^{m_1}|y|^{\frac{s_1+s_2}2}|y|^{\frac12}K_{\frac{s_1-s_2}2}(2\pi|y|)& \mbox{if}&m_1=m_2\\
|y|^{\frac{s_1+s_2}2}|y|\left(K_{\frac{s_1-s_2-1}2}(2\pi|y|)+\sgn(y)K_{\frac{s_1-s_2+1}2}(2\pi|y|)\right)& \mbox{if}&m_1 \neq m_2,
\end{array}
\right.$$
where $K_\nu$ is the $K$-Bessel function of index $\nu$. By \cite{HM}*{Proposition~7.2}, we
have the following estimate.
\begin{lemma}
Let $\sigma>0$. For $\Re(\nu)\in (-\sigma,\sigma)$ we have
$$
K_\nu(u) \ll_\nu \left\lbrace\begin{array}{ccc}
u^{-\sigma-\epsilon} & \mbox{if} & 0<u\le 1+\frac{\pi}2\Im(\nu),\\
u^{-\frac12} e^{-u} & \mbox{if} & u >  1+\frac{\pi}2\Im(\nu).
\end{array}
\right.
$$
\end{lemma}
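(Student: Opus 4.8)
The plan is to deduce this from elementary estimates for $K_\sigma$ with $\sigma$ a \emph{fixed positive real}, via the integral representation $K_\nu(u)=\int_0^\infty e^{-u\cosh t}\cosh(\nu t)\,dt$, valid for $u>0$ and all $\nu\in\C$. Writing $\nu=\alpha+i\beta$ one has $\cosh(\nu t)=\cosh(\alpha t)\cos(\beta t)+i\sinh(\alpha t)\sin(\beta t)$, hence (using $\cosh^2-\sinh^2=1$) $|\cosh(\nu t)|^2=\cosh^2(\alpha t)-\sin^2(\beta t)\le\cosh^2(\sigma t)$ whenever $|\Re\nu|\le\sigma$; therefore
\[
|K_\nu(u)|\le\int_0^\infty e^{-u\cosh t}\cosh(\sigma t)\,dt=K_\sigma(u)\qquad(|\Re\nu|<\sigma).
\]
Moreover $K_\nu=K_{-\nu}$, so we may and do assume $\Im(\nu)\ge0$ (as is the case in the global application, where $\nu=(s_1-s_2)/2$), ensuring $1+\frac{\pi}{2}\Im(\nu)\ge1$. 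Everything is thereby reduced to two classical bounds for $K_\sigma(u)$ with $\sigma>0$ fixed real.

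First, for $0<u\le1$: bounding $\cosh t\ge\tfrac12e^t$, $\cosh(\sigma t)\le e^{\sigma t}$ and substituting $x=\tfrac u2e^t$ gives $K_\sigma(u)\le(2/u)^\sigma\int_{u/2}^\infty e^{-x}x^{\sigma-1}\,dx\le2^\sigma\Gamma(\sigma)u^{-\sigma}$, so $K_\sigma(u)\ll_\sigma u^{-\sigma}\le u^{-\sigma-\epsilon}$ on $(0,1]$. Second, for $u\ge1$: bounding $\cosh t\ge1+\tfrac{t^2}2$ and completing the square in $\int_0^\infty e^{-u(1+t^2/2)+\sigma t}\,dt$ gives $K_\sigma(u)\le e^{\sigma^2/(2u)}\sqrt{2\pi/u}\,e^{-u}\ll_\sigma u^{-1/2}e^{-u}$. (Equivalently, both are immediate from the classical asymptotics $K_\sigma(u)\sim2^{\sigma-1}\Gamma(\sigma)u^{-\sigma}$ as $u\to0^+$ and $K_\sigma(u)\sim\sqrt{\pi/2u}\,e^{-u}$ as $u\to\infty$.)

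I then assemble the two regimes. For $0<u\le1+\frac{\pi}{2}\Im(\nu)$ I split at $u=1$: on $(0,1]$ the first bound gives $|K_\nu(u)|\le K_\sigma(u)\ll_\sigma u^{-\sigma-\epsilon}$, while on the compact interval $[1,1+\frac{\pi}{2}\Im(\nu)]$ the second bound gives $|K_\nu(u)|\le K_\sigma(u)\ll_\sigma e^{-u}$, which is $\ll_\nu u^{-\sigma-\epsilon}$ there because $u^{-\sigma-\epsilon}$ is bounded below by a positive constant depending on $\nu$ on that interval. For $u>1+\frac{\pi}{2}\Im(\nu)\ge1$ the second bound gives $|K_\nu(u)|\le K_\sigma(u)\ll_\sigma u^{-1/2}e^{-u}$ directly. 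This establishes both cases.

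I do not expect a genuine obstacle here: this is a soft estimate and the argument has room to spare. The only point worth flagging is that the majorant $|K_\nu(u)|\le K_\sigma(u)$ discards the real behaviour of $K_\nu$ in the transitional range $1\ll u<\frac{\pi}{2}\Im(\nu)$, where $K_\nu(u)$ is in fact exponentially small in $\Im(\nu)$: the cutoff $u=1+\frac{\pi}{2}\Im(\nu)$ is chosen so that $u^{-1/2}e^{-u}$ there matches the small-$u$ plateau $\asymp\Im(\nu)^{-1/2}e^{-\pi\Im(\nu)/2}$ of $|K_\nu|$ (visible from the uniform asymptotics of $K_\nu$ past its turning point, or from a $\tfrac{\pi}{2}$-rotation of the argument relating $K_\nu$ to oscillatory Bessel functions). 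Capturing that would be needed for a bound \emph{uniform} in $\nu$; for the stated inequality, whose implied constant may depend on $\nu$, the crude majorant and the compactness argument above are enough.
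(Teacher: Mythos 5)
Your proof is correct, but it takes a different route from the paper: the paper gives no argument at all for this lemma, simply quoting Proposition~7.2 of Harcos--Michel \cite{HM}, whereas you give a short self-contained derivation from the integral representation $K_\nu(u)=\int_0^\infty e^{-u\cosh t}\cosh(\nu t)\,dt$ and the clean majorization $|\cosh(\nu t)|\le\cosh(\sigma t)$, reducing everything to the two elementary estimates for $K_\sigma$ with $\sigma>0$ real. All the individual steps check out (the identity $|\cosh(\nu t)|^2=\cosh^2(\alpha t)-\sin^2(\beta t)$, the substitution giving $K_\sigma(u)\le 2^\sigma\Gamma(\sigma)u^{-\sigma}$, the completed square giving $K_\sigma(u)\ll_\sigma u^{-1/2}e^{-u}$ for $u\ge1$, and the compactness argument on $[1,1+\tfrac{\pi}{2}\Im(\nu)]$, which is legitimate precisely because the lemma's implied constant is allowed to depend on $\nu$). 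Two remarks on the comparison. First, your reduction ``assume $\Im(\nu)\ge0$'' via $K_\nu=K_{-\nu}$ is really an interpretation of the cutoff as $1+\tfrac{\pi}{2}|\Im(\nu)|$; that is the correct reading (it is how \cite{HM} states it, and the literal statement would be false for, say, $\Re\nu>\tfrac12$ and $\Im\nu$ large negative, since the second case would then cover $u\to0^+$), so it is worth saying explicitly that you are proving the lemma in that form. Second, as you note yourself, the citation to \cite{HM} buys uniformity in $\Im(\nu)$ (the exponential smallness of $K_\nu$ below the turning point), which your crude majorant $|K_\nu|\le K_\sigma$ discards; since the lemma only asserts $\ll_\nu$ and the global theorem's constants are allowed to depend on $\pi_\infty$, this loss is harmless here, and in exchange your argument is elementary and self-contained.
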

In particular, taking $\sigma=\frac12$ if $m_1=m_2$ and $\sigma=1$ otherwise,~(\ref{estimate}) follows in this case.
\subsubsection{Discrete series representations}
If $\pi$ is the unique irreducible subrepresentation of $\chi_1 \boxplus \chi_2$, where
 $\chi_i=\sgn^{m_i}|.|^{s_i}$ with $0 \le m_2 \le m_1 \le 1$
integers and $s_1+s_2 \in i\R$ and $s_1-s_2 \in \Z_{>0}$,  $s_1-s_2 \equiv m_1-m_2+1 \mod 2$, 
then the lowest weight vector is given by
$$\kappa(y)=|y|^{\frac{s_1+s_2}2}y^{\frac{s_1-s_2+1}2}(1+\sgn(y))e^{-2\pi y},$$
and we see that it satisfies again the estimate~(\ref{estimate}).

\section{Global computations}\label{global}
\subsection{Notations}\label{notation}
Let $\A_{\Q}$ denote the ring of ad\`eles of $\Q$ and let $\psi$ be the unique additive character 
of $\A_\Q$ that is unramified at each finite place and equals $x \mapsto e^{2i\pi x}$ at $\R$. 
 For any local object defined in Section~\ref{local}, we use the subscript $_p$ to denote this object defined over $\Q_p$.
 We also fix in all the sequel 
\begin{equation}\label{so2}
\Gamma_\infty = \SO(\R).
\end{equation}
Let $\pi=\otimes_{p \le \infty} \pi_p$ be a unitary cuspidal automorphic representation of 
$\text{GL}_2(\A_{\Q})$ with central character $\omega_\pi$. Let $N=\prod_pp^{n_p}$ be the conductor
of $\pi$ and let $C=\prod_p p^{c_p}$ be the conductor of $\omega_\pi$. In particular $C \mid N$.
Let us introduce some notation to denote respectively the set of primes for which
Lemma~\ref{principalseries} do or do not apply, namely
\begin{equation}
\label{highlyramifiedprimes}
\H=\left\{p \mid N : c_p > \frac{n_p}2\right\}
 \text{ and }
\L=\left\{p \mid N : c_p \le \frac{n_p}2\right\}.
\end{equation}
We also denote by $S_N$ the set of prime numbers dividing $N$, so that
$$S_N= \H \cup \L.$$
 Then according to Lemma~\ref{principalseries}, $\pi_p$ is an irreducible principal series
 representation for each prime $p \in \H$ , and we have corresponding local exponents of conductors
 $a_1(p)=c_p$ and $a_2(p)=n_p-c_p$.
 Finally, for any set of primes $\P$, define $\Psi(\P)$ to be
the set of positive integers having all their prime divisors among $\P$. We shall use the following obvious result.

\begin{lemma}\label{smoothpower}
Let $\P$ be a finite set of primes.
Then for all $0<\alpha \le \frac1{\log(2)}$ we have 
$$\sum_{s \in \Psi(\P)}s^{-\alpha} = \prod_{p \in \P}\frac{1}{1-p^{-\alpha}} \le \left(\frac{2}{\alpha \log 2} \right) ^{\# \P}$$
\end{lemma}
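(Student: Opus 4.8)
The plan is to separate the two assertions: the Euler product identity, and the uniform bound on each local factor. For the identity, I would use unique factorization: every $s\in\Psi(\P)$ is written uniquely as $\prod_{p\in\P}p^{e_p}$ with non-negative integers $e_p$, and since $\alpha>0$ forces $p^{-\alpha}<1$ for each of the finitely many primes $p\in\P$, each geometric series converges absolutely and one may expand
$$\sum_{s\in\Psi(\P)}s^{-\alpha}=\prod_{p\in\P}\sum_{e\ge 0}p^{-\alpha e}=\prod_{p\in\P}\frac{1}{1-p^{-\alpha}}.$$

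Next I would bound each factor uniformly in $p$. Since $\alpha>0$, the map $p\mapsto\frac{1}{1-p^{-\alpha}}$ is decreasing in $p$, so it suffices to treat $p=2$, i.e.\ to show $\frac{1}{1-2^{-\alpha}}\le\frac{2}{\alpha\log 2}$ for $0<\alpha\le\frac1{\log 2}$. Setting $x=\alpha\log 2\in(0,1]$, this is equivalent to the elementary inequality $1-e^{-x}\ge\frac x2$ on $(0,1]$. I would prove this by noting that $x\mapsto 1-e^{-x}$ is concave and vanishes at $x=0$, so on $[0,1]$ it lies above the chord, giving $1-e^{-x}\ge x(1-e^{-1})>\frac x2$ since $1-e^{-1}>\frac12$. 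Hence $\frac{1}{1-2^{-\alpha}}\le\frac{2}{\alpha\log 2}$, and therefore $\frac{1}{1-p^{-\alpha}}\le\frac{2}{\alpha\log 2}$ for every prime $p$, because $p^{-\alpha}\le 2^{-\alpha}$.

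Finally, multiplying this bound over the $\#\P$ primes of $\P$ and combining with the identity yields
$$\sum_{s\in\Psi(\P)}s^{-\alpha}=\prod_{p\in\P}\frac{1}{1-p^{-\alpha}}\le\left(\frac{2}{\alpha\log 2}\right)^{\#\P},$$
which is the claim. No step is genuinely difficult; the only place that requires a moment's care — and the only place where the hypothesis $\alpha\le\frac1{\log 2}$ is used — is the scalar inequality $1-e^{-x}\ge\frac x2$ on $(0,1]$, so I would make sure to record the concavity argument (or an equivalent one-line estimate) explicitly.
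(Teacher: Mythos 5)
Your proof is correct, and since the paper labels this lemma an ``obvious result'' and supplies no proof at all, your write-up simply fills in the intended elementary argument: the Euler product over $\P$-smooth numbers via unique factorization, and the uniform bound on each factor reduced to $1-e^{-x}\ge \tfrac{x}{2}$ on $(0,1]$, which is exactly where the hypothesis $\alpha\le \tfrac{1}{\log 2}$ enters. Nothing further is needed.
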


\subsection{The Whittaker expansion}
Let $\phi \in \pi$ be an $L^2$-normalized  newform.
Define the global Whittaker newform on $\text{GL}_2(\A_{\Q})$ by
$$W_\phi(g)=\int_{\Q \backslash \A_{\Q}}\phi(n(x)g)\psi(-x)dx.$$ 
It factors as
$$W_\phi(g)=c_\phi\prod_{p \le \infty}W_p(g_p),$$
where $W_p$ are as defined in the first two sections, and $c_\phi$ is a constant that
satisfies $$2\xi(2)c_{\phi}^2\|\prod_{p \le \infty}W_p\|^2_{reg}=1,$$
with $$\|\prod_{p \le \infty}W_p\|_{reg}=L(\pi,\Ad,1)\prod_{p\le \infty}\frac{\zeta_p(2)\|W_p\|_2}{\zeta_p(1)L_p(\pi,\Ad,1)},$$
see \cite{subconvex}*{Lemma~2.2.3}. 
In turn, we have the Whittaker expansion
\begin{equation}
\label{WE}
\phi(g)=\sum_{q \in \Q^\*}W_{\phi}(a(q)g)=c_\phi\sum_{q \in \Q^\*}\prod_{p \le \infty}W_p(a(q)g_p)
\end{equation}
for any $g \in \text{GL}_2(\A_\Q)$. 
Our strategy to bound $\|\phi\|_\infty$ will be to bound for all $g$
$$
|c_\phi|\sum_{q \in \Q^\*}\prod_{p \le \infty}|W_p(a(q)g_p)| \ge |\phi(g)|,
$$
that is, we do not take advantage of the potential oscillations in the Whittaker expansion.
First, we give a bound for the constant
$c_\phi$ appearing here. By \cite{JHPL} we have 
$$L(\pi,Ad,1) \gg N^{-\epsilon}.$$
For $p$ unramified, 
$$\frac{\zeta_p(2)\|W_p\|_2}{\zeta_p(1)L_p(\pi,\Ad,1)}=1.$$
For $p$ ramified, we have
$$L_p(\pi,\Ad,1) \asymp 1 \mbox{ and } 1 \le \|W_p\|_2 \le 2$$
(see \cite{largevalues}*{Lemma~2.16}). Consequently, $|c_\phi| \ll N^\epsilon$. We shall also use that
for any integer $n$ coprime to $N$, we have 
\begin{equation}\label{Lfunction}
\prod_{p \nmid N}W_p(a(n))=n^{-\frac12}\lambda_{\pi}(n),
\end{equation}
where $\lambda_\pi(n)$ is the $n$-th coefficient of the finite part of the $L$-function attached 
to $\pi$.

\subsection{Generating domains}
Using invariances of automorphic forms, we can restrict their argument to lie in some
convenient set of representatives. We first describe such generating domains.

\begin{definition}\label{DN}
We denote by $\D_N$ be the set of $g \in \GL(\A_\Q)$ such that
\begin{itemize}
\item $g_\infty=n(x)a(y)$ for some $x \in \R$ and $y \ge \frac{\sqrt 3}2$, 
\item $g_p=1$ for all $p \nmid N$,
\item $g_p \in \GL(\Z_p)$ for all $p$.
\end{itemize}
\end{definition}

\begin{lemma}\label{gegendom}
Let $\Gamma=\prod_{p \le \infty} \Gamma_p$ be a subgroup of $\GL(\A_\Q)$ such that $\Gamma_\infty=\SO(\R)$,  
for all finite $p$ the group $\Gamma_p$ is an open
subgroup of $\GL(\Z_p)$ whose image by the determinant map is $\Z_p^\*$, and 
$\Gamma_p=\GL(\Z_p)$ for $p \nmid N$.
Then the subset $\D_N$ of $\GL(\A_\Q)$ given by Definition~\ref{DN} contains representatives of each double coset of 
$Z(\A_\Q)\GL(\Q) \backslash \GL(\A_\Q) / \Gamma$.
\end{lemma}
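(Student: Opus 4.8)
The plan is to use strong approximation for $\SL_2$ together with the classical description of the genus-one quotient at the archimedean place. First I would reduce to the determinant-one situation: given an arbitrary $g \in \GL(\A_\Q)$, the idele $\det g$ can be adjusted by an element of $\Q^\*$ (diagonal rational matrix, which lies in $\GL(\Q)$), an element of $Z(\A_\Q)$, and an element of $\prod_p \det(\Gamma_p) \times \R^\*_{>0} = \prod_p \Z_p^\* \times \R^\*_{>0}$, since $\det(\Gamma_p) = \Z_p^\*$ by hypothesis. So after multiplying $g$ on the left by an element of $Z(\A_\Q)\GL(\Q)$ and on the right by an element of $\Gamma$, we may assume $\det g$ is the trivial idele, i.e.\ $g \in \SL_2(\A_\Q)$ up to the scaling already accounted for — more precisely we may assume $g_\infty$ has positive determinant and $g_p \in \SL_2(\Z_p)$ for all finite $p$ after one more right translation by $\Gamma$. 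Here I use that $\GL(\Z_p) = \SL_2(\Z_p)\cdot\{z(u) : u \in \Z_p^\*\}$ and that $z(\Z_p^\*) \subset \Gamma_p$.

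Next, strong approximation for $\SL_2$ over $\Q$ states that $\SL_2(\Q)$ is dense in $\SL_2(\A_\Q^{\mathrm{fin}})$, equivalently $\SL_2(\A_\Q) = \SL_2(\Q)\bigl(\SL_2(\R) \times \prod_p \SL_2(\Z_p)\bigr)$. Applying this to the finite part of $g$ (which already lies in $\prod_p \SL_2(\Z_p)$), and absorbing the finite components into $\Gamma$ via $\SL_2(\Z_p) \subset \GL(\Z_p)$ — note we need $\SL_2(\Z_p) \subset \Gamma_p$ only for $p \nmid N$, where $\Gamma_p = \GL(\Z_p)$; for $p \mid N$ the group $\Gamma_p$ is only open, so here one must be more careful. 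The correct statement to invoke is strong approximation relative to the open subgroup $\prod_p \Gamma_p \cap \SL_2$: since each $\Gamma_p \cap \SL_2(\Z_p)$ is open in $\SL_2(\Z_p)$, we still get $\SL_2(\A_\Q) = \SL_2(\Q)\bigl(\SL_2(\R) \times \prod_p(\Gamma_p \cap \SL_2)\bigr)$, because $\SL_2(\Z)$ surjects onto $\SL_2(\Z_p)/(\Gamma_p\cap\SL_2)$ for each $p$ (again a consequence of strong approximation / the congruence properties of $\SL_2(\Z)$). Thus after left multiplication by $\GL(\Q) \supset \SL_2(\Q)$ and right multiplication by $\Gamma$ we reduce to $g$ with $g_p = 1$ for all finite $p$ and $g_\infty \in \SL_2(\R)$, while the conditions $g_p \in \GL(\Z_p)$ (trivially) and $g_p = 1$ for $p \nmid N$ of Definition~\ref{DN} now hold.

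Finally I would handle the archimedean place by the Iwasawa decomposition $\SL_2(\R) = N(\R)A(\R)^+\SO(\R)$: write $g_\infty = n(x)a(y)k_\infty$ with $y > 0$ and $k_\infty \in \SO(\R) = \Gamma_\infty$, and absorb $k_\infty$ into $\Gamma$. This puts $g_\infty$ in the form $n(x)a(y)$; to arrange $y \ge \frac{\sqrt3}{2}$ one uses the residual action of $\SL_2(\Z)$ (which stabilizes $\prod_p \Gamma_p$ and $Z(\A_\Q)$) to move $x+iy$ into the standard fundamental domain $\{|z| \ge 1,\ |\Re z| \le \tfrac12\}$ for $\SL_2(\Z) \backslash \mathbb H$, on which $\Im z \ge \frac{\sqrt3}{2}$; this last translation is by an element of $\SL_2(\Z) \subset \GL(\Q)$ whose finite components lie in $\prod_p \SL_2(\Z_p) \subset \prod_p\GL(\Z_p)$, so it may disturb the finite components, but they can be put back to $1$ by a final application of strong approximation exactly as above, now with the rational matrix chosen in the principal congruence subgroup fixing the finite data — equivalently, just run the fundamental-domain reduction before the strong approximation step. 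The resulting $g$ lies in $\D_N$. The main obstacle is bookkeeping the order of operations so that fixing the archimedean Iwasawa coordinates does not reintroduce nontrivial finite components; the cleanest route is to do strong approximation last, choosing the rational element to be trivial at all finite places, which is possible precisely because the finite components already sit in the open compact $\prod_p(\Gamma_p\cap\SL_2)$.
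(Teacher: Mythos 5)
Your endgame is where the argument breaks, and it breaks because you are trying to prove something strictly stronger than the lemma. Definition~\ref{DN} demands $g_p=1$ only for $p\nmid N$, and merely $g_p\in\GL(\Z_p)$ at $p\mid N$; that slack is essential. You instead aim for representatives with $g_p=1$ at \emph{every} finite place together with $y\ge\frac{\sqrt3}2$, and that statement is false for the groups the lemma must cover. Indeed, once all finite components equal $1$, any further move inside the double coset that keeps them equal to $1$ multiplies the archimedean component by a rational matrix $\gamma$ which, at every finite $p$, must lie in $\Q_p^\*\Gamma_p$; clearing denominators, $\gamma$ is an integral matrix of determinant $1$ lying in $\Z_p^\*\Gamma_p$ for all $p$, so for instance when $\Gamma_p=K^{(1)}(p^{n_p}\Z_p)$ its lower-left entry is divisible by $p^{n_p}$. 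Thus the available fractional-linear transformations form a subgroup of a $\Gamma_0$-type congruence group with more than one cusp, and for $g_\infty$ with $g_\infty\cdot i=i\epsilon$, $\epsilon$ small, every such $\gamma=\begin{bmatrix}a&b\\c&d\end{bmatrix}$ gives $\Im(\gamma\cdot i\epsilon)=\epsilon/(d^2+c^2\epsilon^2)\le\epsilon<\frac{\sqrt3}2$ (since $M\mid c$ with $M>1$ forces $|d|\ge1$ or $c=0$). So that double coset has no representative of your desired shape. Neither of your remedies escapes this: the rational matrix produced by strong approximation is dictated by the finite components, so ``doing strong approximation last'' re-randomizes the archimedean coordinate, and demanding that it lie in a principal congruence subgroup is exactly the impossible statement just refuted. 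The correct move, which is what the paper does, is to compensate the fundamental-domain translation $\sigma\in\SL(\Z)$ only at the places $p\nmid N$, where $\Gamma_p=\GL(\Z_p)\ni\sigma^{-1}$, and to accept $g_p=\sigma\in\GL(\Z_p)$ at $p\mid N$ --- precisely what $\D_N$ permits.

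There are also slips in your first reduction, though these are repairable. The factorization $\GL(\Z_p)=\SL(\Z_p)\cdot\{z(u):u\in\Z_p^\*\}$ is false (the right-hand side only has determinants in $(\Z_p^\*)^2$), and the inclusion $z(\Z_p^\*)\subset\Gamma_p$ is not implied by the hypotheses and fails for the groups in Lemma~\ref{gendom}, which pin the upper-left entry to $1$ modulo $p^{n_p}$; what $\det\Gamma_p=\Z_p^\*$ actually provides is an element of $\Gamma_p$ with any prescribed unit determinant. Moreover a general adele satisfies $g_p\in\GL(\Z_p)$ only for almost all $p$, so the quotient form of strong approximation (surjectivity of $\SL(\Z)\to\SL(\Z_p)/(\Gamma_p\cap\SL(\Z_p))$) does not apply directly; one should first use class number one of $\Q$ together with $\det\Gamma_p=\Z_p^\*$ to make the finite part land in $\SL$ of the finite adeles, and then invoke the density form $\SL(\A_\Q)=\SL(\Q)\bigl(\SL(\R)\times\prod_p(\Gamma_p\cap\SL(\Z_p))\bigr)$, which you do also state. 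With those repairs your first two steps reproduce the paper's opening reduction; the genuine gap is the one described above.
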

\begin{proof}
By the strong approximation theorem,
any $g \in \GL(\A_\Q)$ can be written as $g_\infty\gamma k$ with 
$g_\infty\in \GL^+(\R)$, $\gamma \in \GL(\Q)$, and $k \in \Gamma$.
Multiplying on the left by $\gamma^{-1}$ and on the right by $k^{-1}$, we 
can first assume that $g_p=1$ for all finite $p$. Next, let $z=g_\infty \cdot i$.
Then there is $\sigma \in \SL(\Z)$ such that 
$\Im(\sigma \cdot z) \ge \frac{\sqrt{3}}2$.
After multiplying on the left by $\sigma$ and on the right by 
$\prod_{p \nmid N} \sigma^{-1}$, we can instead assume that $g_p=1$ for all 
$p \nmid N$, $g_p \in \GL(\Z_p)$ for $p | N$, and 
$\Im(g_\infty z) \ge \frac{\sqrt{3}}2$.
Finally, multiplying by an element of $\SO(\R)$, we can assume that $g_\infty$
is of the form $n(x)a(y)$ with $y \ge \frac{\sqrt{3}}2$.
\end{proof}

Instead of evaluating our newform $\phi$ on elements of our generating domain $\D_N$, 
we shall rather use it with a certain translate of $\phi$, the ``balanced newform".

\begin{lemma}\label{gendom}
Consider the subgroup $K^{(1)}$ of $\GL(\A_Q)$ defined by $K^{(1)}=\Gamma_\infty \prod_{p < \infty} K^{(1)}(p^{n_p}\Z_p)$,
where the local subgroups $\Gamma_\infty$ and $K^{(1)}(p^{n_p}\Z_p)$ are defined in~(\ref{so2}) and~(\ref{K1}) respectively.
For each prime $p$ dividing $N$, let $e_p$ be an integer with $0 \le e_p \le n_p$.
Let $\D_N$ be the subset of $\GL(\A_\Q)$ given by Definition~\ref{DN}.
Then the set $\D_N \prod_{p \mid N} a(p^{e_p}) \subset \GL(\A_\Q)$ contains representatives of each double coset of 
$Z(\A_\Q)\GL(\Q) \backslash \GL(\A_\Q) / K^{(1)}$.
\end{lemma}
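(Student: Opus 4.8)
The plan is to reduce Lemma~\ref{gendom} to Lemma~\ref{gegendom} by choosing the appropriate group $\Gamma$ and then conjugating by the diagonal element $\prod_{p\mid N}a(p^{e_p})$. First I would set, for each prime $p\mid N$, the local group $\Gamma_p = a(p^{e_p})\,K^{(1)}(p^{n_p}\Z_p)\,a(p^{e_p})^{-1} \cap \GL(\Z_p)$; for $p\nmid N$ I put $\Gamma_p=\GL(\Z_p)$, and $\Gamma_\infty=\SO(\R)$. The key observation is that $a(p^{e_p})$ normalizes the relevant congruence structure enough that the conjugate $a(p^{e_p})K^{(1)}(p^{n_p}\Z_p)a(p^{e_p})^{-1}$ is still an open compact subgroup of $\GL(\Q_p)$ lying inside $\GL(\Z_p)$ — this needs to be checked, since conjugation by $a(p^{e_p})$ scales the off-diagonal entries. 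Concretely, from the matrix description in~(\ref{K1}), conjugating $\begin{bmatrix}1+\p^{n_p}&\o\\ \p^{n_p}&\o\end{bmatrix}$ by $a(t_\p^{e_p})$ multiplies the upper-right entry by $t_\p^{e_p}$ and the lower-left entry by $t_\p^{-e_p}$; since $0\le e_p\le n_p$, the lower-left entry becomes $\p^{n_p-e_p}\subseteq\o$ and the upper-right becomes $\p^{e_p}\subseteq\o$, so the conjugate is indeed contained in $\GL(\Z_p)$ and equals $K^{(1)}(\p^{n_p})$ with its off-diagonal congruences tightened — in particular it is still open, and one checks its determinant image is all of $\Z_p^\times$ (the diagonal torus entries are untouched by conjugation, so $U(0)$ still appears in each diagonal slot). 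Thus $\Gamma=\prod_{p\le\infty}\Gamma_p$ satisfies all the hypotheses of Lemma~\ref{gegendom}.

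Next I would apply Lemma~\ref{gegendom} to this $\Gamma$: the domain $\D_N$ of Definition~\ref{DN} contains representatives for every double coset in $Z(\A_\Q)\GL(\Q)\backslash\GL(\A_\Q)/\Gamma$. Now right-translate by $h:=\prod_{p\mid N}a(p^{e_p})$. Since $h^{-1}\Gamma h = K^{(1)}$ at every finite place dividing $N$ (by construction of $\Gamma_p$), and $h$ commutes with $\Gamma_p=\GL(\Z_p)$ for $p\nmid N$ up to... wait — more carefully, $h$ is supported only at primes dividing $N$, so at $p\nmid N$ there is nothing to conjugate, and $h_\infty=1$ so $\Gamma_\infty$ is unchanged; hence globally $h^{-1}\Gamma h=K^{(1)}$. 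Therefore the map $g\mapsto gh$ carries a set of double-coset representatives for $\cdot/\Gamma$ to a set of double-coset representatives for $\cdot/h^{-1}\Gamma h = \cdot/K^{(1)}$, because $Z(\A_\Q)\GL(\Q)g\Gamma = Z(\A_\Q)\GL(\Q)g'\Gamma$ if and only if $Z(\A_\Q)\GL(\Q)gh\,(h^{-1}\Gamma h) = Z(\A_\Q)\GL(\Q)g'h\,(h^{-1}\Gamma h)$. Consequently $\D_N h = \D_N\prod_{p\mid N}a(p^{e_p})$ contains representatives for every double coset in $Z(\A_\Q)\GL(\Q)\backslash\GL(\A_\Q)/K^{(1)}$, which is exactly the claim.

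The main obstacle I anticipate is the bookkeeping in the first step: verifying cleanly that $\Gamma_p$ as defined is genuinely an open subgroup of $\GL(\Z_p)$ with full determinant image $\Z_p^\times$, and that its $h$-conjugate is precisely $K^{(1)}(p^{n_p}\Z_p)$ rather than something merely commensurable with it. This is a routine but slightly delicate matrix computation with $2\times2$ congruence subgroups under conjugation by $a(t_\p^{e_p})$; one has to track both off-diagonal congruence exponents and confirm that shrinking the lower-left entry from $\p^{n_p}$ to $\p^{n_p-e_p}$ (wait, the other direction under $h^{-1}\cdot h$) lands one back exactly at $K^{(1)}(p^{n_p}\Z_p)$. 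Once that local identity is pinned down, the global double-coset transport is formal, exactly as in the proof of Lemma~\ref{gegendom}, and no further approximation or analytic input is needed.
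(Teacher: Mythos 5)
Your proposal is correct and follows essentially the same route as the paper: the auxiliary group $\Gamma_p=a(p^{e_p})K^{(1)}(p^{n_p}\Z_p)a(p^{e_p})^{-1}$ (which, as your computation shows, already lies in $\GL(\Z_p)$ since $0\le e_p\le n_p$, so your intersection with $\GL(\Z_p)$ is redundant) is exactly the congruence subgroup the paper writes down explicitly before invoking Lemma~\ref{gegendom}. The only difference is presentational: you transport double cosets all at once via the bijection $g\mapsto g\prod_{p\mid N}a(p^{e_p})$ and the identity $h^{-1}\Gamma h=K^{(1)}$, whereas the paper performs the same conjugation element-by-element by checking $a(p^{-e_p})k_pa(p^{e_p})\in K^{(1)}(p^{n_p}\Z_p)$.
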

\begin{proof}
Let $$\Gamma_p = \GL(\Z_p) \cap \begin{bmatrix}
   1+p^{n_p}\Z_p      & p^{e_p}\Z_p \\
    p^{n_p-e_p}\Z_p      & \Z_p 
\end{bmatrix},
$$
and $\Gamma=\prod_{p \le \infty} \Gamma_p$.
Let $g \in \GL(\A_\Q)$. By Lemma~\ref{gegendom} there exists $g_d \in \D_N$ such that we have the following equality of
double cosets
$$Z(\A) \GL(\Q) g \prod_p a(p^{-e_p}) \Gamma = Z(\A) \GL(\Q) g_d \Gamma.$$ 
In particular, for each $p \mid N$ there exists $k_p \in \Gamma_p$ such that 
$$Z(\A) \GL(\Q) g \prod_p a(p^{-e_p}) = Z(\A) \GL(\Q) g_d k_p.$$
Now if 
 $$
k_p=
\begin{bmatrix}
   1+ap^{n_p}      & bp^{e_p} \\
    cp^{n_p-e_p}      & d 
\end{bmatrix},
$$
then 
$$a(p^{-e_p})k_pa(p^{e_p})=\begin{bmatrix}
   1+ap^{n_p}       & b \\
    cp^{n_p}      & d 
\end{bmatrix} \in K^{(1)}(p^{n_p}\Z_p).$$
Hence writing 
\begin{align*}
Z(\A) \GL(\Q)g&=Z(\A) \GL(\Q)g_d\prod_{p \mid N}k_pa(p^{e_p})\\
&=Z(\A) \GL(\Q)(g_d\prod_{p \mid N}a(p^{e_p}))\prod_{p \mid N}a(p^{-e_p})k_pa(p^{e_p}),
\end{align*}
we find that the double coset $Z(\A) \GL(\Q)gK^{(1)}$ contains the element 
$g_d\prod_{p \mid N}a(p^{e_p}) \in \D_N \prod_{p \mid N} a(p^{e_p})$.
\end{proof}

By Lemma~\ref{gendom}, we can restrict ourselves to evaluate $|\phi|$ on 
$\D_N \prod_pa(p^{e_p})$, where the exponents $e_p$ may be conveniently chosen. 
Of course, this is equivalent to evaluate its right translate by 
$\prod_pa(p^{e_p})$ on $\D_N$.
Now, by Lemma~\ref{glotolo} of Section~\ref{local}, we can describe this generating domain in terms of
the explicit representatives corresponding to each local double
coset decomposition.

\begin{lemma}\label{L1+DN}
Let $\D_N$ be the subset of $\GL(\A_\Q)$ given by Definition~\ref{DN}.
Let $g \in \D_N \prod_{p \mid N} a(p^{e_p}) \subset \GL(\A_\Q)$. Then $g$ satisfies the following.
\begin{itemize}
\item $g_\infty=n(x)a(y)$ for some $x \in \R$ and $y \ge \frac{\sqrt 3}2$, 
\item $g_p=1$ for all $p \nmid N$,
\item  Let $p \mid N$. If $\ell(g_p) \le e_p$ then $m(g_p)=-e_p$, and if $\ell(g_p) > e_p$ then $m(g_p)=-2\ell(g_p)+e_p$,
where we have used notations of Definition~\ref{gmellnu}.
\end{itemize}
\end{lemma}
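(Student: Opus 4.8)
The plan is to unwind the definitions layer by layer. First I would take $g \in \D_N \prod_{p \mid N} a(p^{e_p})$, so $g = g_d \prod_{p \mid N} a(p^{e_p})$ for some $g_d \in \D_N$. The archimedean component is unchanged by multiplication by $\prod_{p\mid N}a(p^{e_p})$ (which is trivial at $\infty$), so the first bullet $g_\infty = n(x)a(y)$ with $y \ge \frac{\sqrt 3}{2}$ is immediate from Definition~\ref{DN}. Likewise, for $p \nmid N$ we have $(g_d)_p = 1$ and the factor $a(p^{e_p})$ only occurs for $p \mid N$, so $g_p = 1$ for $p \nmid N$, giving the second bullet.

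For the third bullet, fix $p \mid N$. By Definition~\ref{DN} we have $(g_d)_p \in \GL(\Z_p) = \GL(\o)$, hence $g_p = (g_d)_p \, a(t_\p^{e_p}) \in \GL(\o) a(t_\p^{e_p})$. Now I would apply Lemma~\ref{glotolo} with $n = n_p$ and $e = e_p$ (note $0 \le e_p \le n_p$ by hypothesis): it produces a non-negative integer $\ell \le n_p$ and $\nu \in \o^\*$ such that either (i) $\ell \le e_p$ and $g_p \in ZN g_{-e_p,\ell,\nu} K^{(1)}(\p^{n_p})$, or (ii) $e_p < \ell \le n_p$ and $g_p \in ZN g_{-2\ell+e_p,\ell,\nu} K^{(1)}(\p^{n_p})$. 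By the uniqueness in Definition~\ref{gmellnu}, in case (i) we read off $\ell(g_p) = \ell \le e_p$ and $m(g_p) = -e_p$, while in case (ii) we read off $\ell(g_p) = \ell > e_p$ and $m(g_p) = -2\ell(g_p) + e_p$. This is exactly the dichotomy asserted in the third bullet.

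This is essentially a bookkeeping lemma: it is a direct translation of the global generating domain $\D_N \prod_{p\mid N} a(p^{e_p})$ (from Lemma~\ref{gendom}) into the local coordinates $(m(g_p), \ell(g_p), \nu(g_p))$ supplied by the double coset decomposition~(\ref{dcd}). The only nontrivial input is Lemma~\ref{glotolo}, which has already been proved. The main thing to be careful about is matching the normalizations between the adelic notation ($a(p^{e_p})$, $\GL(\Z_p)$) and the local notation ($a(t_\p^{e_p})$, $\GL(\o)$), and invoking the uniqueness clause of Definition~\ref{gmellnu} to conclude that the exponents produced by Lemma~\ref{glotolo} genuinely coincide with $m(g_p)$ and $\ell(g_p)$; there is no real obstacle here.
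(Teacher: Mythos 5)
Your proof is correct and follows the same route as the paper, which simply notes that the lemma is immediate from Definition~\ref{DN} and Lemma~\ref{glotolo}; you have just spelled out the bookkeeping (splitting $g = g_d\prod_{p\mid N}a(p^{e_p})$, applying Lemma~\ref{glotolo} locally with $n=n_p$, $e=e_p$, and invoking the uniqueness in Definition~\ref{gmellnu}) that the paper leaves implicit.
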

\begin{proof}
This follows immediately from Definition~\ref{DN} and Lemma~\ref{glotolo}.
\end{proof}

In particular the (optimal) choice $e_p= \lfloor \frac{n_p}2 \rfloor$ for all $p \mid N$, 
together with Remark~\ref{conjnew}, motivates the following definition.

\begin{definition}\label{IN}
Let $\I_N$ be the set of $g \in \GL(\A_\Q)$ such that
\begin{itemize}
\item $g_\infty=n(x)a(y)$ for some $x \in \R$ and $y \ge \frac{\sqrt 3}2$, 
\item $g_p=1$ for all $p \nmid N$,
\item for all $p \mid N$ we have $\ell(g_p) \le \frac{n_p}2$ and 
$m(g_p) \in \left\{ -\lfloor \frac{n_p}2 \rfloor,  -\lceil \frac{n_p}2 \rceil \right\}$.
\end{itemize}
\end{definition}
{  
\begin{remark}
	Note that for $p \mid N$ we do not require $g_p \in \GL(\Z_p)a(p^{e_p})$, but only the stated 
	conditions about $\ell(g_p)$ and $m(g_p)$.
\end{remark}}

Finally, let us state the quantity we shall actually bound.

\begin{lemma}\label{locgendom}
Recall notations from \S~\ref{notation}. For each $S \subset S_N$, define 
$$\phi^S(g)=\phi\left(g\prod_{p \in S}\begin{bmatrix}
       & 1\\
    p^{n_p} & 
\end{bmatrix}\right).$$
Then 
\begin{equation}\label{supofphi}
\|\phi\|_{\infty} = \max_{S \subset S_N}\sup_{g \in \I_N} | \phi^S(g)|.
\end{equation}
Moreover, for each subset $S \subset S_N$ and for every $g \in \I_N$ we have
\begin{equation}\label{WEgendom}
 |\phi^S(g)| \le
|c_\phi|\sum_{q \in \Q^\*}\left|
\prod_{p \mid N}W^S_p(a(q)g_{m(g_p),\ell(g_p),\nu(g_p)})
\prod_{p \nmid N}W_p(a(q))W_\infty(a(q)n(x)a(y)))\right|,
\end{equation}
where 
$W^S_p=W_p$ if $p \not \in S$, and $W^S_p$ is the normalized local newform attached to the  
contragradient $\tilde{\pi_p}$ if $p \in S$,
\end{lemma}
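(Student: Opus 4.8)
The idea is to run Lemma~\ref{gendom} with the specific choice $e_p = \lfloor \frac{n_p}{2}\rfloor$ for every $p \mid N$, track the resulting cosets through Lemma~\ref{L1+DN}, and then use Remark~\ref{conjnew} to pass from $\ell$-large cosets to $\ell$-small ones at the cost of flipping $\pi_p$ to $\tilde\pi_p$ at some subset of primes. First I would apply Lemma~\ref{gendom} with $e_p = \lfloor \frac{n_p}{2}\rfloor$ to see that $\sup_g|\phi(g)|$ equals the supremum of $|\phi|$ over $\D_N\prod_{p\mid N}a(p^{e_p})$, equivalently the supremum over $\D_N$ of the right translate of $\phi$ by $\prod_p a(p^{e_p})$. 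By Lemma~\ref{L1+DN}, a point $g$ of this generating domain has, at each $p\mid N$, either $\ell(g_p)\le e_p$ with $m(g_p) = -e_p = -\lfloor\frac{n_p}{2}\rfloor$, or $\ell(g_p) > e_p$ with $m(g_p) = -2\ell(g_p) + e_p$.

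\textbf{Reducing to $\ell\le n_p/2$.} For the primes $p$ with $\ell(g_p) > e_p = \lfloor\frac{n_p}{2}\rfloor$, I would invoke the identity~(\ref{mirror}) from Remark~\ref{conjnew}: right-multiplying the local component by $\begin{bmatrix} & 1\\ t_\p^{n_p} & \end{bmatrix}$ sends the coset $g_{m,\ell,\nu}$ (up to $Z$, $N$, and a harmless diagonal unit that gets absorbed into $K^{(1)}(\p^{n_p})$) to $g_{m+2\ell-n_p,\, n_p-\ell,\, \nu}$, and replaces the local newform $W_{\pi_p}$ by (a unimodular multiple of) $\omega_{\pi_p}(\det)\,W_{\tilde\pi_p}$. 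Since $\ell > \lfloor\frac{n_p}{2}\rfloor$ forces $n_p - \ell \le \lceil\frac{n_p}{2}\rceil$, actually $n_p-\ell < n_p/2$ when $\ell>n_p/2$ and $n_p-\ell = n_p/2$ when $\ell=n_p/2$; in all cases the new $\ell$-value is $\le \frac{n_p}{2}$. Moreover the new $m$-value is $m + 2\ell - n_p = -2\ell + e_p + 2\ell - n_p = e_p - n_p = -\lceil\frac{n_p}{2}\rceil$ (using $e_p + \lceil\frac{n_p}{2}\rceil = n_p$). Collecting the primes where we performed this flip into a set $S\subseteq S_N$, we arrive precisely at the statement: the operation on $g$ is exactly right-translation by $\prod_{p\in S}\begin{bmatrix} & 1\\ p^{n_p} & \end{bmatrix}$, i.e.\ we are evaluating $\phi^S$; the resulting point lies in $\I_N$ since now every $\ell(g_p)\le\frac{n_p}{2}$ and $m(g_p)\in\{-\lfloor\frac{n_p}{2}\rfloor,-\lceil\frac{n_p}{2}\rceil\}$; and the local newform at $p\in S$ is the one attached to $\tilde\pi_p$. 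Conversely every $\phi^S$ evaluated on $\I_N$ is bounded by $\|\phi\|_\infty$ trivially since $\phi^S$ is a right translate of $\phi$, which is why~(\ref{supofphi}) is an equality rather than just $\le$. I should note the central character factor $\omega_{\pi_p}(\det)$ and the unimodular constants $\alpha_{\pi_p}$ have absolute value $1$ and so do not affect $|\phi^S|$.

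\textbf{The Whittaker bound.} For~(\ref{WEgendom}), I would start from the Whittaker expansion~(\ref{WE}) applied to $\phi^S$, together with~(\ref{Wnorm}) (dropping oscillation): $|\phi^S(g)| \le |c_\phi|\sum_{q\in\Q^\*}\prod_{p\le\infty}|W^S_p(a(q)(g^S)_p)|$ where $g^S = g\prod_{p\in S}\begin{bmatrix} & 1\\ p^{n_p} & \end{bmatrix}$, and $W^S_p$ is the local factor of the global Whittaker newform of the representation underlying $\phi^S$ — which equals $W_{\pi_p}$ for $p\notin S$ and $W_{\tilde\pi_p}$ for $p\in S$ (the archimedean component being unaffected, and unramified components equalling $W_{\pi_p}$ since flipping an unramified representation is harmless). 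At the unramified primes $(g^S)_p = 1$ so the factor is $W_p(a(q))$ and vanishes unless $q\in\Z_p^\times$-scaled appropriately; at $\infty$ we have $g_\infty = n(x)a(y)$ so the factor is $W_\infty(a(q)n(x)a(y))$; at $p\mid N$, by Definition~\ref{gmellnu} the component $(g^S)_p$ lies in the coset $ZNg_{m(g_p),\ell(g_p),\nu(g_p)}K^{(1)}(\p^{n_p})$ (with the flip already incorporated into the indices via the previous paragraph), so $|W^S_p((g^S)_p\cdot)| = |W^S_p(g_{m(g_p),\ell(g_p),\nu(g_p)})|$ after absorbing the $Z$, $N$ action and $K^{(1)}$-invariance — here one uses that $W^S_p$ transforms by $\psi$ under $N$ and is $K^{(1)}(\p^{n_p})$-invariant, and $a(q)$ on the left commutes with everything appropriately, giving the displayed product. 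Assembling these three types of local factors yields exactly~(\ref{WEgendom}).

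\textbf{Main obstacle.} The delicate point is the bookkeeping in the reduction step: keeping straight that applying~(\ref{mirror}) at $p\in S$ simultaneously (i) moves the coset indices to the $\I_N$-admissible range, (ii) produces $m(g_p) = -\lceil\frac{n_p}{2}\rceil$ rather than $-\lfloor\frac{n_p}{2}\rfloor$, (iii) swaps $\pi_p\rightsquigarrow\tilde\pi_p$, and (iv) introduces only unimodular scalars — and that this is induced by right-translation by $\prod_{p\in S}\begin{bmatrix} & 1\\ p^{n_p} & \end{bmatrix}$, which is what makes $\phi^S$ the right object. One also needs that for primes with $\ell(g_p)\le e_p$ already, no flip is needed and the coset is directly in $\I_N$-form, and that maximizing over which primes require the flip is exactly maximizing over $S\subseteq S_N$. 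Once this dictionary is in place, everything else is a direct unwinding of definitions and the inequality~(\ref{Wnorm}).
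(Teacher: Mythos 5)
Your proposal is correct and follows essentially the same route as the paper's proof: take $e_p=\lfloor \frac{n_p}2\rfloor$ in Lemma~\ref{gendom}, pass through Lemma~\ref{L1+DN}, apply the identity~(\ref{mirror}) of Remark~\ref{conjnev} at the primes with $\ell(g_p)>\frac{n_p}2$ (which defines $S$), note the trivial reverse inequality for~(\ref{supofphi}), and then deduce~(\ref{WEgendom}) from the Whittaker expansion~(\ref{WE}) together with Remark~\ref{conjnew} and the $NZ$/$K^{(1)}$ invariances -- exactly as in the paper; here "\ref{conjnev}" should of course read Remark~\ref{conjnew}. The only cosmetic slip is writing $W^S_p$ evaluated at $(g^S)_p$ while also declaring $W^S_p=W_{\tilde\pi_p}$ for $p\in S$ (the flip should be counted once: either $W_{\pi_p}$ at $a(q)g_pw_p$ or $W_{\tilde\pi_p}$ at $a(q)g_p$), which does not affect the substance of the argument.
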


\begin{proof}
For each $p \mid N$, set $e_p=\lfloor \frac{n_p}2 \rfloor$. For convenience, also set $e'_p=\lceil \frac{n_p}2 \rceil$.
Then by Lemma~\ref{gendom}, we have 
$$\|\phi\|_{\infty} = \sup_{g \in \D_N \prod_{p \mid N}a(p^{e_p})} |\phi(g)| $$
We now prove that 
\begin{equation}\label{bound1}
\sup_{g \in \D_N \prod_{p \mid N}a(p^{e_p})} |\phi(g)| \le \max_{S}\sup_{g \in \I_N} | \phi^S(g)|.
\end{equation}
By Lemma~\ref{L1+DN} we have 
\begin{equation}\label{trivialbutpainful}
\sup_{g \in \D_N \prod_{p \mid N}a(p^{e_p})} |\phi(g)| \le
\sup_{\substack{
x \in \R, y \ge \frac{\sqrt 3}2\\
m(g_p)=-e_p \text{ if } \ell(g_p) \le \frac{n_p}2\\
m(g_p)=-2\ell(g_p)+e_p \text{ otherwise}
}}
 \left| \phi\left(n(x)a(y) \prod_{p \mid N} g_p\right)\right|. 
\end{equation}
By~(\ref{mirror}) we have
{ 
$$
g_{-2\ell_p+e_p,\ell_p,\nu_p}\begin{bmatrix}
      1 & \\
     &- \nu_p^{-2}
\end{bmatrix}
=
n(-p^{e_p-\ell_p}\nu^{-1})z(-p^{-\ell_p}\nu^{-1})
g_{-e'_p,n_p-\ell_p,-\nu_p}
\begin{bmatrix}
       & 1\\
    p^{n_p} & 
\end{bmatrix}.
$$}
Using this identity at each prime belonging to the set $S$ of primes $p$
satisfying $\ell(g_p) > \frac{n_p}2$ in the right hand side of~(\ref{trivialbutpainful})
we obtain by right-$K^{(1)}$ invariance of $\phi$
\begin{equation}\label{bound2}
\sup_{\substack{
x \in \R, y \ge \frac{\sqrt 3}2\\
m(g_p)=-e_p \text{ if } \ell(g_p) \le \frac{n_p}2\\
m(g_p)=-2\ell(g_p)+e_p \text{ otherwise}
}}
 \left| \phi\left(n(x)a(y) \prod_{p \mid N} g_p\right)\right|
 \le \max_{S \subset S_N}
 \sup_{\substack{
x \in \R, y \ge \frac{\sqrt 3}2\\
m(g_p)=-e_p \text{ if } p \in S\\
m(g_p)=-e'_p \text{ otherwise}\\
\ell(g_p) \le \frac{n_p}2
}}
 \left| \phi^S\left(n(x)a(y) \prod_{p \mid N} g_p\right)\right|.
\end{equation}
Combining~(\ref{trivialbutpainful}),~(\ref{bound2}) and the definition of $\I_N$, we obtain the bound~(\ref{bound1}).
From definition, it is clear that 
$$
\|\phi\|_{\infty} \ge \max_{S \subset S_N}\sup_{g \in \I_N} | \phi^S(g)|.
$$
so~(\ref{supofphi}) follows.

The second claim follows from the Whittaker expansion~(\ref{WE}). Observe that by Remark~\ref{conjnew}, 
$$\left|W_p\left(g_p \begin{bmatrix}
       & 1\\
    p^{n_p} & 
\end{bmatrix} \right)\right|=|\tilde{W_p}(g_p)|,$$ where $\tilde{W_p}$ is the normalized local newform attached to the contragradient $\tilde{\pi_p}$. The identity
$$a(q)n(x)=n(qx)a(q)$$ and the left invariance of the modulus of the local {  Whittaker} newforms by $NZ$ give~(\ref{WEgendom}).
\end{proof}

As we shall be interested in the support of the Whittaker expansion, we make now the following definition.
\begin{definition}
Keep notations as in Lemma~\ref{locgendom}.
For every $S \subset S_N$ and $g \in \I_N$ we define
$$Supp(g;S)=\left\{q \in \Q^\* :
\prod_{p \mid N}W^S_p(a(q)g_{m(g_p),\ell(g_p),\nu(g_p)})
\prod_{p \nmid N}W_p(a(q))W_\infty(a(q)n(x)a(y)) \neq 0\right\}.$$
\end{definition}


\begin{assumption}\label{assumption}
From now on we fix $g \in \I_N$ and $S \subset S_N$ (in the notations of \S~\ref{notation} and Definition~\ref{IN}),
and we define for each $p \mid N$, $\ell_p=\ell(g_p)$, $\epsilon_p=-m(g_p)$, $\epsilon'_p=n_p-\epsilon_p$, and $\nu_p=\nu(g_p)$.
We then define the following integers
\begin{equation*}
\begin{array}{cccc}
L=\prod_{p | N} p^{\ell_p}, 
&N_1=\prod_p p^{\epsilon_p}, 
&N_2=\prod_p p^{\epsilon'_p}, 
\end{array}
\end{equation*}
as well as the set of primes 
\begin{equation}\label{sets}
\begin{array}{cccc}
\Hm=\{ p \in \H : \ell_p < a_2(p)\},
&\He=\{ p \in \H : \ell_p= a_2(p)\},
&\Hp=\{ p \in \H : \ell_p> a_2(p)\},
\end{array}
\end{equation}
where $a_2(p)=n_p-c_p$ is the exponent of the conductor of the local character $\chi_2$
(note that in the case where $N=C$, we have $\Hm=\varnothing$ and $\He$ coincides with the set of primes
dividing $N$ and not dividing $L$). 
If $M=\prod_p p^{m_p}$ is any integer, we may use the notation 
\begin{equation*}
M^\star=\prod_{p \in \H_\star}p^{m_p}
\end{equation*}
for $\star \in \{+,-,=\}.$
\end{assumption}

\subsection{Sup norms: maximally ramified case}\label{maxram}
In this subsection, we are assuming $N=C$ and we prove Theorem~\ref{thmgen} in this special case,
as the proof becomes simpler.
We first determine the support of the ``Whittaker expansion"~(\ref{WEgendom}).

\begin{lemma}\label{support}
Recall {  Notation}~\ref{assumption}. 
There is a map 
\begin{align*}
\Psi(\He) &\to \{1, \cdots, L\} \\
s &\mapsto t_s
\end{align*}
such that 
$$Supp(g;S) \subseteq \left\{ 
\frac{s}{N_2L}(t_s+jL) : 
s \in \Psi(\He), 
j \in \Z
\text{ with }
t_s+jL
\text{ coprime to }
N
\right\}.$$
\end{lemma}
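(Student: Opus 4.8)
The plan is to analyze the defining condition in Definition~\ref{defofsupp} one prime at a time. Write $q = \prod_p p^{v_p(q)} \cdot u$ with $u \in \Q^\*$ a unit at all primes dividing $N$ (and also track the archimedean sign/size). For a prime $p \mid N$, the factor $W_p^S(a(q)g_{m(g_p),\ell(g_p),\nu(g_p)})$ is, up to left $NZ$-invariance of the modulus, governed by the local Whittaker values on the cosets $g_{m,\ell,\nu}$ with $\ell=\ell_p$; since $N=C$ we are in the maximally ramified situation, so Lemma~\ref{Wvaluesmax} (applied to $\pi_p$ or, if $p\in S$, to $\tilde\pi_p$, which is again maximally ramified with the same conductor) pins down exactly which $v_p(q)$ and which residue of the unit part mod $p^{\ell_p}$ can occur. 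First I would carry out this local bookkeeping: using $a(q)g_{m,\ell,\nu} \in ZN\,a(q p^{m})w\,n(p^{-\ell}\nu) K^{(1)}(p^{n_p})$ and Remark~\ref{coset}, reduce $a(q)g_{m,\ell,\nu}$ to a coset $g_{m(g_p)+v_p(q),\ell_p,\nu'}$ (for suitable $\nu'$ depending on $\nu_p$ and the unit part of $q$), and then read off the support from Lemma~\ref{Wvaluesmax}.

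Concretely, Lemma~\ref{Wvaluesmax} says that for $\ell_p = 0$ the newform is nonzero on $g_{m,0,\ast}$ exactly when $m \ge -n_p$, while for $0 < \ell_p \le n_p/2$ it is nonzero only on the single coset $m = -n_p - \ell_p$ and only for $\nu$ in a fixed residue class $\nu_1(p) + \p^{\ell_p}$. Translating through the shift by $a(q)$: at primes with $\ell_p = 0$ (i.e.\ $p \nmid L$) one gets the constraint $v_p(q) \ge m(g_p) + \ldots$, i.e.\ a lower bound on $v_p(q)$ together with the requirement that $q$ be a $p$-adic unit times a bounded power — this produces the factor $N_2$ and a factor of $p^{v_p}$ ranging over $\Psi(\He)$ when $\ell_p = a_2(p)$ forces an interval $-n_p < m < -a_1$ of admissible valuations (recall for $p\in\He$ with $N=C$ we have $a_2(p)=\ell_p$, so $p\mid L$ but $p$ may still contribute a range). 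At primes with $0<\ell_p$ (i.e.\ $p\mid L$, and for $N=C$ these are exactly $\He$), the coset is forced, so $v_p(q)$ is determined (contributing the factor $L/\prod_{p\in\He}p^{\ell_p}$... — more precisely one gets $v_p(q)$ fixed up to the $\Psi(\He)$ ambiguity) \emph{and} the unit part of $q$ is forced into the residue class $\nu_1(p) + p^{\ell_p}\o$ by CRT. Assembling: $q$ must be of the shape $\frac{s}{N_2 L}(t_s + jL)$ where $s \in \Psi(\He)$ encodes the free valuations at the $\He$-primes, $t_s \bmod L$ is the residue class forced by the CRT combination of the local congruences $\nu \equiv \nu_1(p)$ (its dependence on $s$ coming from the valuation shift at $\He$-primes interacting with the unit constraint), $j$ ranges over $\Z$, and coprimality to $N$ comes from the unramified places (where $W_p(a(q))$ requires $q \in \Z_p$, forcing $t_s + jL$ coprime to $N$) via~\eqref{Lfunction} and~\eqref{WA}.

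The map $s \mapsto t_s$ is then defined by: given $s \in \Psi(\He)$, the local constraints at each $p \mid N$ determine a unique residue class modulo $L$ for the unit part of $q$ (via CRT over $p \mid L$, trivial at $p \nmid L$), and $t_s \in \{1,\dots,L\}$ is its representative; the point is that this class genuinely depends on $s$ because at an $\He$-prime the admissible valuation of $q$ shifts with the exponent of $p$ in $s$, and the newform's nonvanishing locus $\nu_1(p)+\p^{\ell_p}$ is attached to a \emph{fixed} coset $g_{-n_p-\ell_p,\ell_p,\ast}$, so after the shift by $a(s/\cdots)$ the residue condition on the unit moves accordingly.

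\textbf{Main obstacle.} The delicate point is the precise interaction at the primes in $\He$: there the local newform is nonzero on a whole interval of $m$-values (the line $m=-n_p,-n_p+1,\dots$ coming from the $\ell=0$-type behavior of the \emph{value} at $g_{m,0,\nu}$, reinterpreted for $\ell_p=a_2(p)=\ell_p$) \emph{simultaneously} with a congruence condition on $\nu$, and I need to check carefully that shifting by $a(q)$ converts this into exactly the stated parametrization $\frac{s}{N_2L}(t_s+jL)$ with $s\in\Psi(\He)$ — in particular that the valuation range at each $\He$-prime is a genuine half-line (giving the multiplicative monoid $\Psi(\He)$ rather than a finite set) and that the residue $t_s$ is well-defined in $\{1,\dots,L\}$ independent of the representative $j$. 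Handling the $p\in S$ case (where one uses $\tilde\pi_p$) requires noting that $\omega_{\tilde\pi_p}=\omega_{\pi_p}^{-1}$ still has conductor exponent $n_p$, so Lemma~\ref{Wvaluesmax} applies verbatim with a possibly different $\nu_1(p)$; this only changes $t_s$, not the shape of the support. The rest — extracting the coprimality-to-$N$ condition from the unramified places and the archimedean factor imposing $q>0$ — is routine.
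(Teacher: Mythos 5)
Your overall route is the same as the paper's: translate $a(q)g_{-\epsilon_p,\ell_p,\nu_p}$ into the coset $g_{q_p-\epsilon_p,\ell_p,\nu_p p^{q_p}q^{-1}}$ (up to right multiplication by $\mathrm{diag}(1,u)$, which lies in $K^{(1)}(p^{n_p}\Z_p)$), read the local support off Lemma~\ref{Wvaluesmax} (applied to $\pi_p$, or to $\tilde\pi_p$ when $p\in S$), assemble the unit congruences by CRT, and let the unramified places supply the integer $t_s+jL$. However, your prime-by-prime bookkeeping is internally inconsistent, and taken literally it would not assemble into the stated parametrization. Since $N=C$ we have $a_2(p)=0$ for every $p\mid N$, so $\He=\{p\mid N:\ell_p=a_2(p)=0\}$ is precisely the set of primes dividing $N$ but \emph{not} dividing $L$ (this is the parenthetical remark in Assumption~\ref{assumption}); your middle paragraph asserts the opposite (``$p\in\He$ \ldots so $p\mid L$'', and ``at primes with $0<\ell_p$ \ldots for $N=C$ these are exactly $\He$''), and accordingly attributes the $\Psi(\He)$-freedom to the $p\mid L$ primes, where in fact Lemma~\ref{Wvaluesmax} forces the single valuation $q_p=-\epsilon'_p-\ell_p$ (no ambiguity whatsoever) together with the congruence $\nu_p p^{q_p}q^{-1}\in\nu_1+\p^{\ell_p}$. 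The half-line $q_p\ge-\epsilon'_p$, which is what produces the infinite monoid $\Psi(\He)$, lives at the $\ell_p=0$ primes, where there is no condition on $\nu$ at all. Likewise, the valuation interval $-n_p<m<-a_1$ you invoke belongs to Lemma~\ref{Wvalueshigh}, which assumes $a_2\ge 1$ and never applies when $N=C$; the only input here is the three-line statement of Lemma~\ref{Wvaluesmax}, and the ``main obstacle'' you flag (that the range is a genuine half-line) is just its first line read verbatim.

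Two smaller points. The $s$-dependence of $t_s$ does not arise from a residue condition ``moving'' at the $\He$-primes (there is none there); it falls out cleanly if you phrase the congruence at each $p_0\mid L$ as a condition on $p_0^{-q_{p_0}}q$ modulo $p_0^{\ell_{p_0}}$: writing $\sgn(q)\,s\prod_{p\nmid N}p^{q_p}=\bigl(p_0^{-q_{p_0}}q\bigr)\cdot(\text{a fixed unit of }\Z_{p_0})$ and applying CRT gives $\sgn(q)\,s\prod_{p\nmid N}p^{q_p}\equiv r_0 \bmod L$ with $r_0$ independent of $q$ and $s$, whence $t_s\equiv r_0s^{-1}\bmod L$ — this is exactly where $s$ enters, because $s$ is a unit at every $p\mid L$. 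Finally, coprimality of $t_s+jL$ to $N$ is automatic, since this integer equals $\sgn(q)\prod_{p\nmid N}p^{q_p}$; and your appeal to the archimedean factor ``imposing $q>0$'' is both unnecessary and false in general (for principal series $\pi_\infty$ the function $\kappa$ need not vanish on negative reals) — fortunately the lemma claims no positivity. With the set labels corrected as above, your plan is exactly the paper's proof.
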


\begin{proof}
Let $q = \prod_p p^{q_p} \in \Q^\*$.
Assume $ q \in Supp(g;S)$
First, if $p \nmid N$ then we must have {  $q_p \ge 0$}.
So $\sgn(q)\prod_{p \nmid N}p^{q_p}$ is an integer. We shall see 
that it satisfies a certain congruence condition.
Consider now a prime $p \mid N$,
if $q=p^{q_p}u\in \Q^\*$ with $u\in \Z_p^\*$, we have
\begin{equation}\label{translateg}
a(q)g_{-\epsilon_p,\ell_p,\nu_p}=g_{q_p-\epsilon_p,\ell_p,\nu_p u^{-1}}
\begin{bmatrix}
   1 &  \\
     & u\\
\end{bmatrix}
=g_{q_p-\epsilon_p,\ell_p,\nu_p p^{q_p} q^{-1}}
\begin{bmatrix}
   1 &  \\
     & qp^{-q_p}\\
\end{bmatrix}.
\end{equation}

By Lemma~\ref{Wvaluesmax} (applied either to $\pi_p$ if $p \not \in S$ or to $\tilde{\pi_p}$ if $p \in S$), 
if $\ell_p=0$ then $q_p-\epsilon_p \ge -n_p$, so $q_p \ge -\epsilon'_p$.
It follows that $$s \doteq \prod_{\substack{p \mid N\\ p \nmid L}}p^{q_p+\epsilon_p'} \in \Psi(\He).$$
On the other hand, if $\ell_p > 0$ then  {  $q_p - \epsilon_p =-n_p - \ell_p$}, so 
$q_p =-\epsilon'_p - \ell_p$.
Now fix a prime $p_0 \mid L$ (so $\ell_{p_0} >0$), and write
\begin{align*}
\sgn(q)s\prod_{p \nmid N}p^{q_p} &=\sgn(q)
\prod_{p \nmid N}p^{q_p}\prod_{\substack{p \mid N\\ p \nmid L}}p^{q_p+\epsilon_p'}\\
&=\sgn(q)
\prod_{p \nmid N}p^{q_p}\prod_{\substack{p \mid N\\ p \nmid L}}p^{q_p+\epsilon_p'}
\prod_{\substack{p \mid L \\ p \neq p_0}} p^{q_p+\epsilon'_p+\ell_p}\\
&=\sgn(q)\prod_{p \neq p_0} p^{q_p}
\prod_{\substack{p \mid N\\ p \nmid L}}p^{\epsilon_p'}
\prod_{\substack{p \mid L\\p \neq p_0}}p^{\epsilon_p'+\ell_p}\\
&=
\left(p_0^{-q_{p_0}}q \right)
\left(
\prod_{\substack{p \mid N\\ p \nmid L}}p^{\epsilon_p'}
\prod_{\substack{p \mid L\\p \neq p_0}}p^{\epsilon_p'+\ell_p}\right).
\end{align*}
By Lemma~\ref{Wvaluesmax} and equality~(\ref{translateg}),
$p_0^{-q_{p_0}}q$ satisfies a certain congruence condition modulo $p^{\ell_{p_0}}\Z_{p_0}$.
In addition $\prod_{\substack{p \mid N\\ p \nmid L}}p^{\epsilon_p'}\prod_{\substack{p \mid L\\p \neq p_0}}p^{\epsilon_p'+\ell_p}$
 is clearly in $\Z_{p_0}^\*$.
So we just showed that the integer $\sgn(q)s\prod_{p \nmid N}p^{q_p}$ satisfies a certain congruence condition modulo $p_0^{\ell_{p_0}}$.
Applying the same reasoning with each prime dividing $L$, we obtain by the Chinese remainder theorem
a condition of the type
$$\sgn(q)s\prod_{p \nmid N}p^{q_p} \equiv r_0 \mod L.$$
 Since in addition $L$ and $s$ are coprime, we can write
\begin{equation}\label{coprimepart}
\sgn(q)\prod_{p \nmid N}p^{q_p}=t_s+jL
\end{equation} 
for some integer $t_s \equiv r_0 s^{-1} \mod L$, and $j$ ranging over $\Z$.
Finally, 
\begin{align*}
q=\sgn(q)\prod_{p \mid L}p^{-\epsilon_p'-\ell_p}\prod_{\substack{p \mid N\\ p \nmid L}}p^{q_p}\prod_{p \nmid N}p^{q_p}=\frac{s}{N_2L}(t_s+jL).
\end{align*}
\end{proof}

We now compute the size of each term in ``the Whittaker expansion"~(\ref{WEgendom}).
\begin{lemma}\label{size}
Keep notations from {  Notation}~\ref{assumption} and Lemma~\ref{locgendom}.
Let $q=\frac{s}{N_2L}(t_s+jL)$ as in Lemma~\ref{support}. 
Then we have 
$$\left|\prod_{p \mid N}W^S_p(a(q)g_{-\epsilon_p,\ell_p,\nu_p})\prod_{p \nmid N}W_p(a(q)) \right|=
L^{\frac12}s^{-\frac12}|t_s +jL|^{-\frac12}\left|\lambda_\pi(|t_s+jL|)\right|.$$
\end{lemma}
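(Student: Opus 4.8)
The plan is to compute the product in \eqref{WEgendom} prime by prime, using the explicit Whittaker values from Lemma~\ref{Wvaluesmax} together with the factorization $q = \frac{s}{N_2 L}(t_s + jL)$ obtained in Lemma~\ref{support}. Write $n = t_s + jL$, which by hypothesis is coprime to $N$, and recall $s \in \Psi(\He)$, so $s$ is coprime to $L$ and supported on primes dividing $N$. First I would isolate the contribution of the primes $p \nmid N$: since $q_p \geq 0$ at all such primes and $\prod_{p \nmid N} p^{q_p} = |n|$ is the prime-to-$N$ part of $|q|$ (note $s$ and $N_2 L$ are $N$-units at such places), the identity \eqref{Lfunction} gives $\prod_{p \nmid N} |W_p(a(q))| = |n|^{-1/2}|\lambda_\pi(|n|)|$.

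Next I would handle each $p \mid N$. Using \eqref{translateg}, $|W^S_p(a(q)g_{-\epsilon_p,\ell_p,\nu_p})| = |W^S_p(g_{q_p - \epsilon_p, \ell_p, \nu_p u^{-1}})|$ for the appropriate unit $u$, and the $K^{(1)}$- (resp.\ $K^{(2)}$-)invariance absorbs the diagonal factor. There are two cases following the proof of Lemma~\ref{support}. If $p \nmid L$ (i.e.\ $\ell_p = 0$), then Lemma~\ref{Wvaluesmax} gives $|W^S_p(g_{q_p - \epsilon_p, 0, *})| = \mathbbm{1}_{q_p - \epsilon_p \geq -n_p}\, q^{-(q_p - \epsilon_p + n_p)/2} = q^{-(q_p + \epsilon'_p)/2}$ (the indicator is automatically satisfied on the support), and since $v_p(s) = q_p + \epsilon'_p$ while $v_p(n) = 0$, this equals $p^{-v_p(s)/2}$. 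If $p \mid L$ (i.e.\ $\ell_p > 0$), then on the support $q_p = -\epsilon'_p - \ell_p$, and Lemma~\ref{Wvaluesmax} gives $|W^S_p(g_{-n_p - \ell_p, \ell_p, *})| = p^{\ell_p/2}$ (the congruence condition on the unit is exactly what cuts out the support, so it holds), while $v_p(s) = 0$ and $v_p(n) = 0$; thus the factor is $p^{\ell_p/2} = L$'s $p$-part to the power $1/2$. Multiplying over all $p \mid N$ yields $\prod_{p \mid N}|W^S_p(\cdots)| = L^{1/2} s^{-1/2}$, since the primes dividing $L$ contribute $L^{1/2}$ and those dividing $N$ but not $L$ contribute $\prod p^{-v_p(s)/2} = s^{-1/2}$ (as $s \in \Psi(\He)$ is supported precisely on such primes).

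Combining the two computations gives exactly
$$\left|\prod_{p \mid N}W^S_p(a(q)g_{-\epsilon_p,\ell_p,\nu_p})\prod_{p \nmid N}W_p(a(q))\right| = L^{1/2}\, s^{-1/2}\, |n|^{-1/2}\, |\lambda_\pi(|n|)| = L^{\frac12}s^{-\frac12}|t_s+jL|^{-\frac12}|\lambda_\pi(|t_s+jL|)|,$$
as claimed. The main obstacle, or rather the point requiring care, is bookkeeping: one must check that on $Supp(g;S)$ all the indicator and congruence conditions in Lemma~\ref{Wvaluesmax} are automatically satisfied (so that the moduli are genuinely equal to the stated powers of $q$, not merely bounded by them), and that the valuations of $s$, $|n|$, and $L$ at each prime dividing $N$ are correctly accounted for — in particular that $s$ carries exactly the "excess" valuation $q_p + \epsilon'_p$ at primes $p \mid N$, $p \nmid L$, while contributing nothing at primes dividing $L$. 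The decomposition of $q$ recorded at the end of the proof of Lemma~\ref{support} already packages all of this, so the argument is essentially a careful transcription of that computation into absolute values of Whittaker values.
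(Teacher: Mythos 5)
Your proposal is correct and follows essentially the same route as the paper: translate $a(q)g_{-\epsilon_p,\ell_p,\nu_p}$ into a standard coset representative via the identity~(\ref{translateg}), apply Lemma~\ref{Wvaluesmax} (to $\pi_p$ or $\tilde{\pi_p}$) separately for $\ell_p=0$ and $\ell_p>0$ to get $L^{\frac12}s^{-\frac12}$, and use~(\ref{Lfunction}) at the unramified places to get $|t_s+jL|^{-\frac12}|\lambda_\pi(|t_s+jL|)|$. The only cosmetic slip is the parenthetical appeal to $K^{(2)}$-invariance: the diagonal factor $\mathrm{diag}(1,u)$ with $u\in\Z_p^\*$ lies in $K^{(1)}(p^{n_p}\Z_p)$, which is the invariance enjoyed by $W^S_p$ in all cases, so no $K^{(2)}$-invariance is needed (nor would it hold for that factor in general).
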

\begin{proof}
For $q$ of this form, using~(\ref{coprimepart}) and~(\ref{Lfunction}), we have 
\begin{align*}
\prod_{p \nmid N}W_p(a(q)) &= \prod_{p \nmid N}W_p(t_s+jL) \\
&=(|t_s +jL|)^{-\frac12}\lambda_\pi(|t_s+jL|),\\
\end{align*}
 and Lemma~\ref{Wvaluesmax} (observe that the contragradient representation $\tilde{\pi_p}$ satisfies the same hypothesis as $\pi_p$)
 together with equality~(\ref{translateg}) give
 \begin{align*}
 \left|\prod_{p \mid N}W^S_p(a(q)g_{-\epsilon_p,\ell_p,\nu_p}) \right| &=
\left|\prod_{p | N}W^S_p(g_{q_p-\epsilon_p,\ell_p,\nu_pp^{q_p}q^{-1}}) \right|\\
&= 
L^{\frac12}\prod_{ {\ell_p=0}}p^{-\frac{q_p-\epsilon_p+n_p}{2}} =L^{\frac12}s^{-\frac12}.
\end{align*}
\end{proof}

By Combining Lemmas~\ref{support} and~\ref{size} the ``Whittaker expansion"~(\ref{WEgendom}) is thus
bounded above by
$$c_\phi L^{\frac12}\sum_{s \in \Psi(\He)}s^{-\frac12}\sum_{j \in \Z}|t_s+jL|^{-\frac12+\delta+\epsilon}
\kappa\left(\frac{t_s+jL}{N_2L}sy\right).$$
Using estimate~(\ref{estimate}), we first evaluate the $j$-sum as follows:
\begin{align*}
\sum_{j \in \Z}|t_s+jL|^{-\frac12+\delta+\epsilon}
&\kappa\left(\frac{t_s+jL}{N_2L}sy\right) \\
&\ll 
\left(\frac{sy}{N_2L}\right)^{-\epsilon}\sum_{j \in \Z}|t_s+jL|^{-\frac12+\delta+\epsilon}
\exp \left((-2\pi+\epsilon)\frac{\left|t_s+jL\right|}{N_2L}sy\right)\\
&\ll \left(\frac{sy}{N_2L}\right)^{-\epsilon}\left(1+ \int_\R |tL|^{-\frac12+\delta+\epsilon}\exp \left((-2\pi+\epsilon)\frac{\left|t\right|}{N_2}sy\right)dt\right)\\
&\ll \left(\frac{N_2L}{sy}\right)^{\epsilon}\left(
1+\left(\frac{N_2}{Lsy}\right)^{\frac12}\left(\frac{N_2L}{sy}\right)^{\delta}
\right).
\end{align*}
Altogether, using Lemma~\ref{smoothpower} we get
$$|\phi(g)| \ll c_\phi\left(\frac{N_2L}{y}\right)^{\epsilon} \left(L^{\frac12}+\frac{N_2^{\frac12+\delta}L^\delta}{y}\right) \ll 
N^{\epsilon} \left(L^{\frac12}+N_2^{\frac12}N^{\delta}\right)$$
since $c_\phi \ll N^\epsilon$, $y \ge \frac{\sqrt 3}2$ and $N_2L \le N$. 
This establishes Theorem~\ref{thmgen} when $N=C$ because we have $L \le N^{\frac12}$ and $N_2\le \prod_{p|N}p^{\lceil \frac{n_p}2 \rceil}$.

\subsection{Sup norms: general ramification} 
Finally, let us address the necessary modifications when we do not make any assumption about the conductor of $\chi$. 
The analysis of the local Whittaker newform $W_p$ is similar, but with more cases to take into account, depending on which of the
sets~(\ref{highlyramifiedprimes}) the prime $p$ belongs.
In particular, it still holds that for all $p \in \H$ we have $\pi_p = \chi_1 \boxplus \chi_2$,
but the exponents $a_2(p)=n_p-c_p$ of the conductor of the local characters $\chi_2$ may not all equal zero.
We thus also get a Whittaker expansion supported on arithmetic progressions dictated by the primes 
{  at which the central character is	highly ramified}.
The rest of our argument differs from the maximally ramified case, as we rather use strong $L^2$-averages of the local
newforms, in the spirit of \cite{hybrid}, instead of the local bounds. 
Of course, in the maximally ramified case, these $L^2$-averages follow immediately from the computation of the support of the local
newform $W_p$ and the local bound, so the difference on the argument is mainly expository.
 
We first determine the support of the ``Whittaker expansion"~(\ref{WEgendom}) in
this more general case.
 \begin{lemma}\label{supporthigh}
Recall {  Notation}~\ref{assumption}. 
There is a map
\begin{align*}
\Psi(\He) \times \Psi(\L) & \to  \left\{ 1, \cdots, \frac{L^+C^+}{N^+} \right\}\\
(s,u) & \mapsto t_{su}
\end{align*}
such that 
\begin{align*}
Supp(g;S) \subseteq
\left\{
su\frac{N^+}{N_2L^+C^+}\left(t_{su}+j\frac{L^+C^+}{N^+}\right),
s \in \Psi(\He), u \in \Psi (\L), j \in \Z\right. \\ \left.\text{ with } t_{su}+j\frac{L^+C^+}{N^+} \text{ coprime to } N
\right\}.
\end{align*}
\end{lemma}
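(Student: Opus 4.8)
The plan is to mimic the proof of Lemma~\ref{support}, but now tracking contributions from all three classes of ramified primes $\Hm$, $\He$, $\Hp$ as well as the primes in $\L$, and using Lemma~\ref{Wvalueshigh} in place of Lemma~\ref{Wvaluesmax}. Fix $q=\prod_p p^{q_p}\in Supp(g;S)$. As before, at unramified primes $p\nmid N$ we must have $q_p\ge 0$, so $\sgn(q)\prod_{p\nmid N}p^{q_p}$ is a positive integer. At each $p\mid N$ we invoke Lemma~\ref{Wvalueshigh} (applied to $\pi_p$ or $\tilde{\pi_p}$ according to whether $p\in S$), using the conjugation identity~(\ref{translateg}) $a(q)g_{-\epsilon_p,\ell_p,\nu_p}=g_{q_p-\epsilon_p,\ell_p,\nu_p p^{q_p}q^{-1}}\bigl[\begin{smallmatrix}1&\\&qp^{-q_p}\end{smallmatrix}\bigr]$ to read off which values $q_p-\epsilon_p$ of the first index are admissible and, when $\ell_p$ is large enough, the congruence condition imposed on $\nu_p p^{q_p}q^{-1}$ modulo a power of $p$, which translates into a congruence on $p^{-q_p}q$. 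The key bookkeeping is that the admissible $q_p$ depend on which of the cases $\ell_p<a_2(p)$, $\ell_p=a_2(p)$, $\ell_p>a_2(p)$ (and within the last case, $\ell_p<\tfrac{a_1+a_2}{2}$ or $=\tfrac{a_1+a_2}{2}$) we are in — precisely the partition~(\ref{sets}) of $\H$.

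The next step is to extract the ``free'' part of $q$. For $p\in\Hp$, Lemma~\ref{Wvalueshigh} pins down $q_p-\epsilon_p=-\ell_p-a_1(p)$ exactly (for some choice of $\epsilon_p\in\{-m(g_p)\}$ consistent with Assumption~\ref{assumption}), so $p$ contributes a fixed power to $q$; likewise each $p\in\Hp$ contributes a congruence on the cofactor modulo $p^{\ell_p-a_2(p)}$. For $p\in\He$ and for $p\in\L$ the admissible $q_p$ range over an interval, producing the smooth-power factors $s\in\Psi(\He)$ and $u\in\Psi(\L)$ (compare the $s\in\Psi(\He)$ appearing in Lemma~\ref{support}); the interplay of $\epsilon_p,\epsilon_p'$ and the local newform support at primes in $\L$ — where $\pi_p$ need not be principal series — is where Saha's description enters and where one reads off the modulus. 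For $p\in\Hm$ the local support forces $m(g_p)=-n_p$ with no congruence, contributing a fixed power but no arithmetic progression. Collecting the fixed powers of $p$ for $p\in\Hp$ produces the factor $\dfrac{N^+}{N_2L^+C^+}$ multiplying everything (here $a_1(p)=c_p$, $a_2(p)=n_p-c_p$ so $\ell_p+a_1(p)=\ell_p+c_p$, and the notation $M^\star$ of~(\ref{stupidnotation}) records exactly the $\Hp$-part), while the congruences at primes in $\Hp$, assembled by the Chinese remainder theorem, give a single congruence modulo $\prod_{p\in\Hp}p^{\ell_p-a_2(p)}=\dfrac{L^+C^+}{N^+}$. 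Since this modulus is coprime to $s$ and $u$, dividing the congruence class by $su$ defines $t_{su}\in\{1,\dots,\tfrac{L^+C^+}{N^+}\}$, and writing the coprime-to-$N$ part of $q$ as $su\,(t_{su}+j\tfrac{L^+C^+}{N^+})$ and multiplying back in the fixed prime powers yields the claimed form of $Supp(g;S)$; the coprimality to $N$ of $t_{su}+j\tfrac{L^+C^+}{N^+}$ is automatic since that integer is exactly $\sgn(q)\prod_{p\nmid N}p^{q_p}$ up to the unit factors, as in~(\ref{coprimepart}).

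The main obstacle is the correct handling of the boundary case $\ell_p=\tfrac{a_1(p)+a_2(p)}{2}$ for $p\in\Hp$ in Lemma~\ref{Wvalueshigh}: there the congruence on $\nu$ is modulo $t_\p^{(a_1-a_2)/2}$ rather than $t_\p^{\ell-a_2}$, and one must check this is compatible with — in fact implied by, or at worst absorbed into — the uniform modulus $\dfrac{L^+C^+}{N^+}=\prod_{p\in\Hp}p^{\ell_p-a_2(p)}$; since at that boundary $\ell_p-a_2(p)=\tfrac{a_1(p)-a_2(p)}{2}$, the two moduli in fact coincide, so no loss occurs. A secondary point requiring care is that Lemma~\ref{Wvalueshigh} is stated for $F=\Q_p$ only, which is harmless here since our global field is $\Q$; and one should note that passing to $\tilde{\pi_p}$ when $p\in S$ leaves $a_1(p),a_2(p)$ and the support structure unchanged, exactly as observed in the proof of Lemma~\ref{size}. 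Once the support is identified, the remaining terms (the size of each summand and the $j$-sum via~(\ref{estimate})) are handled as in \S\ref{maxram}, with the extra $u$-sum over $\Psi(\L)$ controlled by Lemma~\ref{smoothpower} and the $L^2$-bounds of~\cite{hybrid}.
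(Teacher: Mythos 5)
Your proposal is correct and follows essentially the same route as the paper's own proof: pin down $q_p$ at primes in $\Hm$ and $\Hp$ via Lemma~\ref{Wvalueshigh}, let the primes in $\He$ and $\L$ (via Saha's Proposition~2.10 of~\cite{hybrid}) produce the smooth factors $s,u$, and assemble the $\Hp$-congruences by the Chinese remainder theorem into a single progression modulo $\prod_{p\in\Hp}p^{\ell_p-a_2(p)}=\frac{L^+C^+}{N^+}$, defining $t_{su}$ after dividing out the coprime factor $su$. Only cosmetic imprecisions remain (the factor $\frac{N^+}{N_2L^+C^+}$ collects the fixed exponents $-\epsilon_p'$ from \emph{all} ramified primes, not just those in $\Hp$, and at $p\in\Hm$ what is forced is the translated index $q_p-\epsilon_p=-n_p$, i.e.\ $q_p=-\epsilon_p'$), none of which affects the argument.
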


\begin{remark}
It is immediate by unravelling the definitions that $\frac{L^+C^+}{N^+}$ is an integer.
\end{remark}

\begin{proof}
The reasoning is quite similar to the proof of Lemma~\ref{support}, but we use~\cite{hybrid}*{Proposition~2.10} for the primes in $\L$
and Lemma~\ref{Wvalueshigh} instead of Lemma~\ref{Wvaluesmax} for those primes in {  $\H $.}
Fix $q = \prod_p p^{q_p} \in Supp(g;S)$.
As before, $\sgn(q) \prod_{p \nmid N} p^{q_p}$ is an integer and we shall see it satisfies some congruence condition.
If $p \in \He$ or $p \in \L$ then examination of either~Lemma~\ref{Wvalueshigh} or~\cite{hybrid}*{Proposition~2.10} gives  
$q_p \ge -\epsilon_p'$. So $$su \doteq \prod_{p \in \L \cup \He} p^{q_p + \epsilon'_p} \in \Psi(\L \cup \He).$$
In addition {  Lemma~\ref{Wvalueshigh} gives that} for $p \in \Hm$ we have {  $q_p=-\epsilon'_p$}, and 
for $p \in \Hp$ we have $q_p =\epsilon_p-\ell_p-a_1(p)$.
Fix $p_0 \in \Hp$ and write
{ 
\begin{align*}
\sgn(q)su\prod_{p \nmid N}p^{q_p} &=\sgn(q)
\prod_{p \nmid N}p^{q_p}\prod_{p \in \L \cup \He} p^{q_p + \epsilon'_p}\\
&=\sgn(q)
\prod_{p \nmid N}p^{q_p}\prod_{p \in \L \cup \He} p^{q_p + \epsilon'_p}
\prod_{p\in  \Hm}p^{q_p+\epsilon'_p}\prod_{\substack{p \in \Hp \\ p \neq p_0}}p^{q_p-(\epsilon_p-\ell_p-a_1(p))}\\
&=\sgn(q)\prod_{p \neq p_0} p^{q_p}
\prod_{p \in \L \cup \He \cup \Hm} p^{\epsilon'_p}
\prod_{\substack{p \in \Hp \\ p \neq p_0}}p^{\ell_p+a_1(p)-\epsilon_p}.
\end{align*}}
By Lemma~\ref{Wvalueshigh} $\sgn(q)\prod_{p \neq p_0} p^{q_p}$ satisfies a congruence condition modulo $p^{\ell_{p_0}-a_2(p_0)}\Z_{p_0}$.
Then using the Chinese remainder theorem we see that $\sgn(q)su\prod_{p \nmid N}p^{q_p}$ is an integer satisfying a congruence condition
modulo $\frac{L^+C^+}{N^+}$.
It follows that we can write
\begin{equation*}
\sgn(q)\prod_{p \nmid N}p^{q_p}=t_{su}+j\frac{L^+C^+}{N^+}.
\end{equation*}
Finally,
\begin{align*}
q&=\sgn(q)\prod_{p \nmid N}p^{q_p}\prod_{p \in \L \cup \He} p^{q_p}\prod_{p \in \Hm} p^{-\epsilon'_p}\prod_{p\in \Hp}p^{\epsilon_p-{  \ell_p}-a_1(p)}\\
&=\left(t_{su}+j\frac{L^+C^+}{N^+}\right)\frac{su}{\prod_{p \in \L} p^{\epsilon'_p}N_{2}^=}\frac{1}{N_2^-}\frac{N_1^+}{L^+C^+}.
\end{align*}
\end{proof}

If we were now to proceed following the exact same strategy as in the maximally ramified case, then we would get a worse 
estimate because of weaker local bounds for the local newform in the case 
$\ell_p=\frac{n_p}2$ (see~\cite{TAMS}*{Lemma~5.10}). Instead, we rely on $L^2$-averages of the local newvectors established by 
Saha~\cite{hybrid}.
To this end, we make first the following trivial lemma.
\begin{lemma}\label{trivialCS}
Suppose $(a_n)_{n \in \Z}$, $(b_n)_{n \in \Z}$ are two families of positive real numbers such that $\sum_{n\in \Z} a_nb_n$
converges absolutely\footnote{meaning the partial sums $\sum_{n\in J} |a_nb_n|$ indexed by finite sets $J \subset \Z$ are uniformly bounded.}, and $a_n$ is periodic with period $T$. Let $M$ be such that
$$\sum_{n=0}^{T-1} a_n^2 \le M.$$
Then we have 
$$\sum_{n \in \Z} a_nb_n \le M^{\frac12} \sum_{k \in \Z} 
\left(\sum_{j=0}^{T-1} b_{Tk+j}^2\right)^{\frac12}.$$
\end{lemma}
Next, we express the ``Whittaker expansion"~(\ref{WEgendom}) so as to be tackled by previous lemma.
\begin{lemma}\label{WEanbn}
Recall {  Notation}~\ref{assumption}. Then
$$|\phi^S(g)| \le |c_\phi|
\sum_{s \in \Psi(\He)}\sum_{u \in \Psi(\L)}\sum_{n \in \Z}a_nb_n,$$
where $a_n$ is periodic with period $L$ and satisfies
\begin{equation}\label{periodic}
\sum_{n=0}^{L-1} a_n^2 \ll N^\epsilon L(su)^{-\frac12}
\end{equation}
and $$b_n=
|n|^{-\frac12}
\left|
\lambda_\pi\left(n\right)\kappa\left(\frac{N^+suny}{N_2L^+C^+}\right)
\right|
\1_{n \equiv t_{su} \mod \frac{L^+C^+}{N^+}}.$$
\end{lemma}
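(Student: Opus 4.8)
The starting point is the bound~(\ref{WEgendom}) from Lemma~\ref{locgendom}, which expresses $|\phi^S(g)|$ as $|c_\phi|$ times a sum over $q \in \Q^\*$ of the absolute value of a product of local Whittaker values. By Lemma~\ref{supporthigh} the summand vanishes unless $q$ is of the shape $su\frac{N^+}{N_2L^+C^+}(t_{su}+j\frac{L^+C^+}{N^+})$ for $s \in \Psi(\He)$, $u \in \Psi(\L)$, $j \in \Z$. So the first step is simply to reindex the $q$-sum by the triple $(s,u,j)$; writing $n = \sgn(q)\prod_{p \nmid N}p^{q_p} = t_{su}+j\frac{L^+C^+}{N^+}$ as in~(\ref{gjiue}), this $n$ runs over the residue class $t_{su} \bmod \frac{L^+C^+}{N^+}$ as $j$ ranges over $\Z$. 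With this substitution $q = n\,\frac{N^+su}{N_2L^+C^+}$, and the archimedean factor $W_\infty(a(q)n(x)a(y))$ has modulus $\kappa(qy) = \kappa(N^+suny/(N_2L^+C^+))$ by~(\ref{lowestwv}), which is exactly the archimedean part of the claimed $b_n$.

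The second step is to separate the local factors into the "arithmetic" part and everything else. For the unramified primes, $\prod_{p \nmid N}W_p(a(q)) = |n|^{-1/2}\lambda_\pi(|n|)$ by~(\ref{Lfunction}) together with~(\ref{coprimepart})-style bookkeeping (the relevant $n$ is coprime to $N$ by the support condition). For the ramified primes, I want to peel off from $\prod_{p \mid N}|W_p^S(a(q)g_{-\epsilon_p,\ell_p,\nu_p})|$ a factor depending only on $n \bmod L$ — call it $a_n$ — absorbing the remaining ($n$-independent, or at least $L$-periodic) contributions. Concretely, using~(\ref{translateg}) at each $p \mid N$ one writes $W_p^S(a(q)g_{-\epsilon_p,\ell_p,\nu_p}) = W_p^S(g_{q_p-\epsilon_p,\ell_p,\nu_p p^{q_p}q^{-1}})$, and by Lemmas~\ref{Wvaluesmax} and~\ref{Wvalueshigh} the modulus of this depends on $q$ only through $q_p$ (fixed by the support analysis) and through the class of $\nu_p p^{q_p}q^{-1}$ modulo $1+\p^{\ell_p}$, i.e.\ through $n \bmod p^{\ell_p}$. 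Collecting these over $p \mid L$ and invoking the Chinese remainder theorem gives a genuinely $L$-periodic sequence $a_n$; the primes $p \mid N$ with $\ell_p = 0$ contribute a constant factor which I fold into $a_n$ as well. This identifies $|\phi^S(g)|$ with $|c_\phi|\sum_{s}\sum_{u}\sum_{n} a_n b_n$ with $b_n$ exactly as in the statement.

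The third and final step is the bound~(\ref{periodic}) on $\sum_{n=0}^{L-1}a_n^2$. Since $a_n$ is (up to the $n$-independent constant from the $\ell_p=0$ primes, which contributes $s^{-1/2}$ after taking squares and summing — matching the $(su)^{-1/2}$ with the $u$-part absorbed analogously) a product over $p \mid L$ of local quantities each $L/p^{\ell_p}$-to-$1$ with values bounded by $|W_p^S(g_{\cdot,\ell_p,\cdot})|$, the sum $\sum_{n=0}^{L-1}a_n^2$ factors as a product over $p \mid L$ of local $L^2$-sums $\sum_{\nu \bmod 1+\p^{\ell_p}} |W_p^S(g_{m_p,\ell_p,\nu})|^2 p^{\ell_p}$ (the $p^{\ell_p}$ counting multiplicities), and this is precisely the kind of local $L^2$-average controlled by Saha~\cite{hybrid} (and, in the maximally ramified case, directly by Lemma~\ref{Wvaluesmax}): each such factor is $\ll p^{\ell_p + \epsilon}$, whence the product is $\ll L^{1+\epsilon}$, giving~(\ref{periodic}) after the $(su)^{-1/2}$ is restored. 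The main obstacle is the second step: making the factorization of $a_n$ clean and CRT-correct across the primes dividing $L$ while being careful that the $\He$, $\Hp$, $\L$ primes enter with the right exponents $q_p$ and the right congruence moduli ($\p^{\ell_p}$ versus $\p^{\ell_p - a_2(p)}$), so that what is left over really is $L$-periodic in $n$ and nothing $n$-dependent has been swept into the constant. Once the bookkeeping from Lemma~\ref{supporthigh} is transcribed carefully this is routine, but it is where all the care is needed; the quotation of Saha's $L^2$-bounds in step three is then immediate.
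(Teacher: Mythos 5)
Your first two steps coincide with the paper's proof: you reindex the expansion~(\ref{WEgendom}) via Lemma~\ref{supporthigh}, pull out the unramified factor $|n|^{-\frac12}\lambda_\pi(|n|)$ from~(\ref{Lfunction}) and the archimedean factor $\kappa$, and set $a_n=\bigl|\prod_{p\mid N}W_p^S\bigl(a\bigl(\tfrac{N^+sun}{N_2L^+C^+}\bigr)g_{-\epsilon_p,\ell_p,\nu_p}\bigr)\bigr|$, which is $L$-periodic. A minor caveat: the periodicity should be justified by the general fact that $\nu\mapsto W(g_{m,\ell,\nu})$ only depends on $\nu \bmod 1+\p^{\ell}$ (equivalently \cite[Remark~2.12]{hybrid}), not by Lemmas~\ref{Wvaluesmax} and~\ref{Wvalueshigh}, which say nothing about the primes of $\L$, where $\pi_p$ need not be a principal series.

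The genuine gap is in your third step, i.e.\ in the proof of~(\ref{periodic}). You attribute the entire factor $(su)^{-\frac12}$ to the $n$-independent constants coming from the primes with $\ell_p=0$, and you bound the local contribution of each $p\mid L$ by $p^{\ell_p+\epsilon}$. That accounting is correct only in the maximally ramified case. In general $s\in\Psi(\He)$ and $u\in\Psi(\L)$ also receive contributions from primes dividing $L$: a prime $p\in\He$ has $\ell_p=a_2(p)$, which is positive as soon as $c_p<n_p$, and a prime $p\in\L$ can have any $0\le\ell_p\le \frac{n_p}2$. At such primes the exponent $s_p$ (resp.\ $u_p$) varies with $q$, the corresponding local factor of $a_n$ is not constant in $n$, and the required decay $p^{-s_p/2}$ (resp.\ $p^{-u_p/2}$) must be extracted from the local $L^2$-average itself; bounding that average trivially by $p^{\ell_p+\epsilon}$ discards it. Your argument thus only yields $\sum_{n<L}a_n^2\ll N^\epsilon L\,(s_0u_0)^{-\frac12}$ with $s_0,u_0$ the parts of $s,u$ supported on primes with $\ell_p=0$, which is strictly weaker than~(\ref{periodic}) whenever a prime of $\He\cup\L$ with $s_p>0$ or $u_p>0$ divides $L$; and the full $(su)^{-\frac14}$ coming from $M^{\frac12}$ is exactly what later makes the $s,u$-sums in~(\ref{CSbound}) close via Lemma~\ref{smoothpower}. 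The repair is the paper's route: use the $U_p(\ell_p)$-invariance to identify $\sum_{n=0}^{L-1}a_n^2$ with $\phi(L)\prod_{p\mid N}\int_{\Z_p^\times}\bigl|W_p^S\bigl(a\bigl(\tfrac{N^+suv}{N_2L^+C^+}\bigr)g_{-\epsilon_p,\ell_p,\nu_p}\bigr)\bigr|^2\,d^\times v$ and apply \cite[Proposition~2.10]{hybrid} at every ramified prime, which gives $\ll p^{-u_p/2}$ for $p\in\L$, $\ll p^{-s_p/2}$ for $p\in\He$, and $\ll 1$ for $p\in\Hm\cup\Hp$, hence~(\ref{periodic}). (Your ``$p^{\ell_p}$ counting multiplicities'' in the factorized local sums is also off --- by the Chinese remainder theorem the sum over $n \bmod L$ factors with no extra multiplicity factors --- but that is cosmetic compared with the issue above.)
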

\begin{proof}
The claim will follow from the ``Whittaker expansion"~(\ref{WEgendom}) 
\begin{equation*}
|\phi^S(g)| \le
|c_\phi|\sum_{q \in \Q^\*}\left|
\prod_{p \mid N}W^S_p(g_{q_p-\epsilon_p,\ell_p,\nu_pp^{q_p}q^{-1}})
\prod_{p \nmid N}W_p(a(q))W_\infty(a(q)n(x)a(y)))\right|,
\end{equation*}
together with~(\ref{lowestwv}),~(\ref{Lfunction}) and Lemma~\ref{supporthigh} once we have shown that the sequence defined by
 $$a_n=\left|\prod_{p \mid N}W^S_p\left(a\left(\frac{N^+sun}{N_2L^+C^+}\right)g_{-\epsilon_p,\ell_p,\nu_p}\right)\right|$$
 satisfies the desired properties. 
 For each $p \mid N$, let us {  distinguish cases depending on which of the sets defined
 	 in~(\ref{highlyramifiedprimes}) and~(\ref{sets}) contains $p$.}
 For all $v \in \Z_p^\*$ we have
$$
W^S_p\left(a\left(\frac{N^+suv}{N_2L^+C^+}\right)g_{-\epsilon_p,\ell_p,\nu_p}\right) =
\begin{cases}
W^S_p\left(a\left(v\right)g_{u_p-n_p,\ell_p,*}\right) \text{ if } p \in \L\\
W^S_p\left(a\left(v\right)g_{-n_p,\ell_p,*}\right) \text{ if } p \in \Hm\\
W^S_p\left(a\left(v\right)g_{s_p-n_p,\ell_p,*}\right) \text{ if } p \in \He\\
W^S_p\left(a\left(v\right)g_{-{  \ell_p}-a_1(p),\ell_p,*}\right) \text{ if } p \in \Hp,\\
\end{cases}
$$
where each $*$ is independent of $v$.
By~\cite{hybrid}*{Proposition~2.10}, we then get
$$
\int_{v \in \Z_p^\*}\left|W^S_p\left(a\left(\frac{N^+suv}{N_2L^+C^+}\right)g_{-\epsilon_p,\ell_p,\nu_p}\right)\right|^2d^\*v \ll
\begin{cases}
p^{-\frac{u_p}2} \text{ if } p \in \L\\
1 \text{ if } p \in \Hm\\
p^{-\frac{s_p}2} \text{ if } p \in \He\\
1 \text{ if } p \in \Hp.\\
\end{cases}
$$
Now by~\cite{hybrid}*{Remark~2.12}, for each $p \mid N$ 
 and each fixed $s \in \Psi(\He)$ and $u \in \Psi(\L)$, the map on $\Z_p^\*$ given by
$$v \mapsto \left|W^S_p\left(a\left(\frac{N^+suv}{N_2L^+C^+}\right)g_{-\epsilon_p,\ell_p,\nu_p}\right)\right|$$
is $U_p(\ell_p)$-invariant. Hence by the Chinese remainder theorem, these give rise to a map
on $\left( \Z / L \Z \right)^\*$ given by
$$(r \mod L) \mapsto \prod_{p \mid N} \left|W^S_p\left(a\left(\frac{N^+sur}{N_2L^+C^+}\right)g_{-\epsilon_p,\ell_p,\nu_p}\right)\right|,$$
and by Lemma~\ref{supporthigh}, if $a_n \neq 0$ then $n$ is coprime to $N$, thus the sum~(\ref{periodic}) is just
\begin{align*}
\sum_{r \in \left( \Z / L \Z \right)^\*}
 \prod_{p \mid N} \left|W^S_p\left(a\left(\frac{N^+sur}{N_2L^+C^+}\right)g_{-\epsilon_p,\ell_p,\nu_p}\right)\right|^2 
&=
\phi(L)\prod_{p \mid N} \int_{v \in \Z_p^\*}\left|W^S_p\left(a\left(\frac{N^+suv}{N_2L^+C^+}\right)g_{-\epsilon_p,\ell_p,\nu_p}\right)\right|^2d^\*v \\
&\ll N^\epsilon L(su)^{-\frac12},
\end{align*}
where $\phi$ is Euler's totient. 
\end{proof} 

By combining Lemmas~\ref{trivialCS} and~\ref{WEanbn} it follows
\begin{equation}\label{CSbound}
|\phi(g)| \ll N^\epsilon L^{\frac12} \sum_{s \in \Psi(\He)} s^{-\frac14}\sum_{u \in \Psi(\L)}u^{-\frac14}
\sum_{k \in \Z} S_k^\frac12,
\end{equation}
where 
\begin{equation}\label{defofSk}
S_k=\sum_{j=0}^{L-1} b_{Lk+j}^2,
\end{equation} and $b_n$ is defined in Lemma~\ref{WEanbn}.

\begin{lemma}\label{Sk}
For all $k \ge 1$ the sum~(\ref{defofSk}) satisfies 
$$S_k \ll \frac{N^+}{L^+C^+}L^{2\delta}\left(\frac{N}{suy}\right)^\epsilon k^{-1+2\delta+\epsilon}\exp\left(-\pi\frac{N^+suyL}{N_2L^+C^+}k\right),$$
and the same estimate holds for $k \le -2$ upon replacing $k$ with $-k-1$ in the right hand side.
Finally, 
$$S_0, S_{-1} \ll\left(\frac{N_2}{suy}\right)^\epsilon \left(1+\frac{N^+}{L^+C^+}\left(\frac{N_2L^+C^+}{N^+suy}\right)^{2\delta+\epsilon}\right).$$
\end{lemma}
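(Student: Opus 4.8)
The plan is to estimate $S_k=\sum_{j=0}^{L-1}b_{Lk+j}^2$ directly from the definition of $b_n$ in Lemma~\ref{WEanbn}, namely
$$b_n^2 = |n|^{-1}\left|\lambda_\pi(n)\right|^2 \kappa\!\left(\tfrac{N^+suny}{N_2L^+C^+}\right)^2 \1_{n \equiv t_{su} \bmod \frac{L^+C^+}{N^+}},$$
treating the three regimes $k\ge 1$, $k\le -2$, and $k\in\{0,-1\}$ separately. The key input on the size of $|\lambda_\pi(n)|$ is that it is $\ll_\epsilon n^{\delta+\epsilon}$ by the definition of $\delta$ as an exponent towards Ramanujan, and on the archimedean side we use the uniform bound~(\ref{estimate}) for $\kappa$.

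First, for $k\ge 1$: all indices $n=Lk+j$ with $0\le j\le L-1$ satisfy $Lk \le n < L(k+1)$, so $|n|\asymp Lk$, the Ramanujan bound gives $|\lambda_\pi(n)|\ll (Lk)^{\delta+\epsilon}$, and since $\kappa$ is decreasing in $|y|$ up to the factor $|y|^{-\epsilon}$, every such $n$ contributes at most $(Lk)^{-1+2\delta+\epsilon}\big(\tfrac{N^+suyL}{N_2L^+C^+}\big)^{-\epsilon}\exp\big(-(2\pi-\epsilon)\tfrac{N^+suy L k}{N_2L^+C^+}\big)$. The congruence $n\equiv t_{su}\bmod\frac{L^+C^+}{N^+}$ restricts $j$ to an arithmetic progression of modulus $\frac{L^+C^+}{N^+}$ inside $\{0,\dots,L-1\}$, so the number of surviving terms is $\ll \frac{N^+L}{L^+C^+}$ (using that $\frac{L^+C^+}{N^+}$ divides $L$, which follows since $L^+\mid L$ and, as noted in the remark after Lemma~\ref{supporthigh}, $\frac{L^+C^+}{N^+}$ is an integer; one checks $\frac{L^+C^+}{N^+}\mid L^+\mid L$ because $a_2(p)\le \ell_p$ for $p\in\Hp$). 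Multiplying the per-term bound by this count, absorbing the constant $\frac{N^+suyL}{N_2L^+C^+}{}^{-\epsilon}$ into $(N/suy)^\epsilon$, and bounding $L^{-1+2\delta+\epsilon}\cdot\frac{N^+L}{L^+C^+}=\frac{N^+}{L^+C^+}L^{2\delta+\epsilon}$ yields exactly
$$S_k \ll \frac{N^+}{L^+C^+}L^{2\delta}\left(\frac{N}{suy}\right)^\epsilon k^{-1+2\delta+\epsilon}\exp\left(-\pi\frac{N^+suyL}{N_2L^+C^+}k\right),$$
where I have kept only $\exp(-\pi(\cdots)k)$ since $2\pi-\epsilon>\pi$, folding the leftover exponential decay into the polynomial savings if needed. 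The case $k\le -2$ is identical with $|n|\asymp L|k+1|\asymp L(-k-1)$, giving the same bound with $k\mapsto -k-1$.

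For $k=0$ (and symmetrically $k=-1$), the indices run over $0\le n\le L-1$ (resp.\ $-L\le n\le -1$) in the progression $n\equiv t_{su}$, so I split off the single smallest term $|n|\asymp 1$ — contributing $\ll (N_2/suy)^\epsilon$ after using $\kappa\ll 1$ — and bound the remaining $\ll \frac{N^+L}{L^+C^+}$ terms by comparison with an integral: $\sum_n |n|^{-1+2\delta+\epsilon}\exp\big(-(2\pi-\epsilon)\tfrac{N^+suy|n|}{N_2L^+C^+}\big)$ over the progression is, after rescaling, $\ll \frac{N^+}{L^+C^+}\int_0^\infty t^{-1+2\delta+\epsilon}\exp(-(2\pi-\epsilon)\tfrac{N^+suyt}{N_2L^+C^+})\,dt\asymp \frac{N^+}{L^+C^+}\big(\tfrac{N_2L^+C^+}{N^+suy}\big)^{2\delta+\epsilon}$, which combined with the isolated term gives
$$S_0, S_{-1} \ll\left(\frac{N_2}{suy}\right)^\epsilon \left(1+\frac{N^+}{L^+C^+}\left(\frac{N_2L^+C^+}{N^+suy}\right)^{2\delta+\epsilon}\right).$$

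The main obstacle I anticipate is purely bookkeeping: correctly tracking the moduli $\frac{L^+C^+}{N^+}$, $L$, $L^+$ and verifying the divisibilities among them so that the count of surviving residues in each block of length $L$ is genuinely $\frac{N^+L}{L^+C^+}$ (and not off by a factor involving the primes in $\He\cup\Hm$ or $\L$), together with making sure the $(N/suy)^\epsilon$ absorptions are legitimate, i.e.\ that $s,u\le N$ so the error factors $\big(\tfrac{N^+suyL}{N_2L^+C^+}\big)^{-\epsilon}$ really are $\ll (N/suy)^\epsilon$. The analytic estimates themselves — the comparison of a sum over an arithmetic progression with an integral, and the standard Gamma-integral $\int_0^\infty t^{-1+\alpha}e^{-ct}\,dt=c^{-\alpha}\Gamma(\alpha)$ — are routine.
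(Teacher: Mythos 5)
Your proposal is correct and follows essentially the same route as the paper: for the blocks with $k\ge 1$ (and symmetrically $k\le -2$) you bound $S_k$ by the number of admissible residues $\frac{N^+L}{L^+C^+}$ times the largest term using $|\lambda_\pi(n)|\ll n^{\delta+\epsilon}$ and the $\kappa$-estimate~(\ref{estimate}), and for $k\in\{0,-1\}$ you isolate the first term and compare the rest with the Gamma-type integral, which is exactly the paper's argument. The divisibility $\frac{L^+C^+}{N^+}\mid L$ and the $\epsilon$-absorptions you flag are indeed the only bookkeeping points, and they check out.
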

\begin{proof}
For those intervals $[kL,(k+1)L]$ not containing zero we use estimate~(\ref{estimate}) then we
bound $S_k$ by the number of terms multiplied by the largest term. Since $\frac{L^+C^+}{N^+}$ divides $L$, the congruence condition
on $n=Lk+j$ modulo  $\frac{L^+C^+}{N^+}$ is equivalent to the same congruence condition on $j$. We thus get, for $k \ge 1$
\begin{align*}
S_k &\ll \frac{N^+}{L^+C^+}L^{2\delta}\left(\frac{N}{suy}\right)^\epsilon k^{-1+2\delta+\epsilon}\exp\left(-\pi\frac{N^+suyL}{N_2L^+C^+}k\right).
\end{align*}
For $k=0$ we have 
\begin{align*}
S_0 &\ll\left(\frac{N}{suy}\right)^\epsilon \left(1+\int_{0}^{\infty}
 \left(t_{su}+t\frac{L^+C^+}{N^+}\right)^{-1+2\delta+\epsilon} \exp\left(-\pi \frac{N^+suy}{N_2L^+C^+}\left(t_{su}+t\frac{L^+C^+}{N^+}\right)\right)dt\right)\\
&\ll \left(\frac{N}{suy}\right)^\epsilon \left(1+\frac{N^+}{L^+C^+}\left(\frac{N_2L^+C^+}{N^+suy}\right)^{2\delta+\epsilon}\right) .
\end{align*}
The analogous results for $k<0$ follow by changing $k$ to $-k-1$ and $t_{s,u}$ to 
$\frac{L^+C^+}{N^+}-t_{s,u}$.
\end{proof}

By a similar argument as in \S~\ref{maxram}, Lemma~\ref{Sk} implies 
$$\sum_{k \in \Z}S_k^{\frac12} \ll \left(\frac{N}{suy}\right)^\epsilon 
\left(1+\left(\frac{N^+}{L^+C^+}\right)^{\frac12}\left(\frac{N_2L^+C^+}{N^+suy}\right)^{\delta+\epsilon} + 
\left(\frac{N_2}{Lsuy}\right)^{\frac12}\left(\frac{N_2L^+C^+}{N^+suy}\right)^{\delta+\epsilon}\right) $$ 
 {  Substituting} this into~(\ref{CSbound}) and using Lemma~\ref{smoothpower} we
obtain
$$
|\phi(g)| \ll N^{\delta+\epsilon} \left(
L^{\frac12}
+N_2^{\frac12}\right).
$$
{  Lemma~\ref{locgendom} together with the results from Section~3.4 and~3.5 finishes the proof of Theorem~\ref{thmgen}}.
\begin{bibdiv}
	\begin{biblist}

\bib{Assing}{article}{
		author={Assing, Edgar},
		title={On sup-norm bounds part I: ramified {  Maa\ss} newforms over number fields.},
		status={preprint},
		year={2017},
		eprint={https://arxiv.org/abs/1710.00362}
	}
	
	\bib{thesis}{article}{
		author={Assing, Edgar},
		title={Local Analysis of Whittaker New Vectors and Global Applications.},
		status={Ph.D. thesis},
		organization={The University of Bristol},
		year={2019},
	eprint={https://research-information.bris.ac.uk/en/theses/local-analysis-of-whittaker-new-vectors-and-global-applications(fd1d8115-513c-48db-94de-79abb60c5c89).html}
	}

\bib{TAMS}{article}{
   author={Assing, Edgar},
   title={On the size of $p$-adic Whittaker functions},
   journal={Trans. Amer. Math. Soc.},
   volume={372},
   date={2019},
   number={8},
   pages={5287--5340},
   issn={0002-9947},
}

\bib{BH}{article}{
   author={Blomer, Valentin},
   author={Holowinsky, Roman},
   title={Bounding sup-norms of cusp forms of large level},
   journal={Invent. Math.},
   volume={179},
   date={2010},
   number={3},
   pages={645--681},
   issn={0020-9910},
}
	
\bib{HM}{article}{
   author={Harcos, Gergely},
   author={Michel, Philippe},
   title={The subconvexity problem for Rankin-Selberg $L$-functions and
   equidistribution of Heegner points. II},
   journal={Invent. Math.},
   volume={163},
   date={2006},
   number={3},
   pages={581--655},
   issn={0020-9910},
}

\bib{HT}{article}{
   author={Harcos, Gergely},
   author={Templier, Nicolas},
   title={On the sup-norm of Maass cusp forms of large level. III},
   journal={Math. Ann.},
   volume={356},
   date={2013},
   number={1},
   pages={209--216},
   issn={0025-5831},
} 
 
\bib{JHPL}{article}{
   author={Hoffstein, Jeffrey},
   author={Lockhart, Paul},
   title={Coefficients of Maass forms and the Siegel zero},
   note={With an appendix by Dorian Goldfeld, Hoffstein and Daniel Lieman},
   journal={Ann. of Math. (2)},
   volume={140},
   date={1994},
   number={1},
   pages={161--181},
   issn={0003-486X},
}

\bib{minimaltype}{article}{
   author={Hu, Yueke},
   author={Nelson, Paul D.},
   author={Saha, Abhishek},
   title={Some analytic aspects of automorphic forms on $\rm GL(2)$ of
   minimal type},
   journal={Comment. Math. Helv.},
   volume={94},
   date={2019},
   number={4},
   pages={767--801},
   issn={0010-2571},
}

\bib{HuSaha}{article}{
   author={Hu, Yueke},
   author={Saha, Abhishek},
   title={Sup-norms of eigenfunctions in the level aspect for compact
   arithmetic surfaces, II: newforms and subconvexity},
   journal={Compos. Math.},
   volume={156},
   date={2020},
   number={11},
   pages={2368--2398},
   issn={0010-437X},
}
   
 \bib{KimSarnak}{article}{
   author={Kim, Henry H.},
   title={Functoriality for the exterior square of ${\rm GL}_4$ and the
   symmetric fourth of ${\rm GL}_2$},
   note={With appendix 1 by Dinakar Ramakrishnan and appendix 2 by Kim and
   Peter Sarnak},
   journal={J. Amer. Math. Soc.},
   volume={16},
   date={2003},
   number={1},
   pages={139--183},
   issn={0894-0347},
}
  
\bib{subconvex}{article}{
   author={Michel, Philippe},
   author={Venkatesh, Akshay},
   title={The subconvexity problem for ${\rm GL}_2$},
   journal={Publ. Math. Inst. Hautes \'{E}tudes Sci.},
   number={111},
   date={2010},
   pages={171--271},
   issn={0073-8301},
}

\bib{Nelson}{article}{
   author={Nelson, Paul D.},
   title={Microlocal lifts and quantum unique ergodicity on
   $GL_2(\mathbb{Q}_p)$},
   journal={Algebra Number Theory},
   volume={12},
   date={2018},
   number={9},
   pages={2033--2064},
   issn={1937-0652},
}

\bib{largevalues}{article}{
   author={Saha, Abhishek},
   title={Large values of newforms on ${\rm GL}(2)$ with highly ramified
   central character},
   journal={Int. Math. Res. Not. IMRN},
   date={2016},
   number={13},
   pages={4103--4131},
   issn={1073-7928},
}

\bib{hybrid}{article}{
   author={Saha, Abhishek},
   title={Hybrid sup-norm bounds for Maass newforms of powerful level},
   journal={Algebra Number Theory},
   volume={11},
   date={2017},
   number={5},
   pages={1009--1045},
   issn={1937-0652},
}

\bib{compact}{article}{
   author={Saha, Abhishek},
   title={Sup-norms of eigenfunctions in the level aspect for compact
   arithmetic surfaces},
   journal={Math. Ann.},
   volume={376},
   date={2020},
   number={1-2},
   pages={609--644},
   issn={0025-5831},
}
  
\bib{RS}{article}{
   author={Schmidt, Ralf},
   title={Some remarks on local newforms for $\rm GL(2)$},
   journal={J. Ramanujan Math. Soc.},
   volume={17},
   date={2002},
   number={2},
   pages={115--147},
   issn={0970-1249},
}
 
\bib{Templier}{article}{
   author={Templier, Nicolas},
   title={Large values of modular forms},
   journal={Camb. J. Math.},
   volume={2},
   date={2014},
   number={1},
   pages={91--116},
   issn={2168-0930},
}
  \end{biblist}
\end{bibdiv}
\end{document}